\renewcommand{\PrelimText}{\footnotesize[\,Version: 
\texttt{\jobname.tex}\hfill \today\ at \thistime\,]}
      \theoremstyle{plain}
      \newtheorem{Thm}{Theorem}[section]
      \newtheorem{Lem}[Thm]{Lemma}
      \newtheorem{Cor}[Thm]{Corollary}
      \newtheorem{Example}[Thm]{Example}
      \newtheorem{Subexample}[Thm]{Subexample}
      \newtheorem{Subsubexample}[Thm]{Subsubexample}
      \newtheorem{Counterexample}[Thm]{Counterexample}
      \theoremstyle{definition}
      \newtheorem{Def}[Thm]{Definition}
      \theoremstyle{remark}
\newcounter{aequiv}
\newenvironment{Aequiv}
  {\begin{list}{\rm(\roman{aequiv})}%
   {\usecounter{aequiv}%
     \setlength{\leftmargin}{0pt}%
     \setlength{\labelwidth}{2.8em}%
     \setlength{\itemindent}{1.4\labelwidth}%
     \setlength{\partopsep}{0pt}%
     \setlength{\topsep}{0pt}%
     \setlength{\labelsep}{1.3em}%
     \setlength{\itemsep}{0.0ex}\setlength{\parsep}{0.0ex}%
   }}%
 {\end{list}} 
\let\oldsqrt\sqrt
\def\sqrt{\mathpalette\DHLhksqrt}
\def\DHLhksqrt#1#2{%
\setbox0=\hbox{$#1\oldsqrt{#2\,}$}\dimen0=\ht0
\advance\dimen0-0.2\ht0
\setbox2=\hbox{\vrule height\ht0 depth -\dimen0}%
{\box0\lower0.4pt\box2}}
\newcommand{\E}{{\mathbb E}}     %
\newcommand{\N}{{\mathbb N}} \renewcommand{\P}{{\mathbb P}}    %
\newcommand{\R}{{\mathbb R}}     %
\newcommand{\cA}{{\mathcal A}}\newcommand{\cB}{{\mathcal B}}
\newcommand{\cC}{{\mathcal C}}\newcommand{\cD}{{\mathcal D}}
\newcommand{\cE}{{\mathcal E}}\newcommand{\cF}{{\mathcal F}}
\newcommand{\cI}{{\mathcal I}}
\newcommand{\cL}{{\mathcal L}}
\newcommand{\cO}{{\mathcal O}}\newcommand{\cP}{{\mathcal P}}
\newcommand{\cQ}{{\mathcal Q}}\newcommand{\cR}{{\mathcal R}}
\newcommand{\cX}{{\mathcal X}}
\newcommand{\cY}{{\mathcal Y}}
\renewcommand{\epsilon}{\varepsilon}\renewcommand{\phi}{\varphi} %
\renewcommand{\rho}{\varrho}\renewcommand{\theta}{\vartheta}     %
\newcommand{\Alpha}{\mathrm{A}}    %
\newcommand{\Eta}{\mathrm{H}}      %
\newcommand{\defn}[1]{\emph{#1}}   
\renewcommand{\[}{\begin{eqnarray*}}\renewcommand{\]}{\end{eqnarray*}}
\newcommand{\la}{\begin{eqnarray}}\newcommand{\al}{\end{eqnarray}}
\newcommand{\id}{{\mbox{\rm id}}}  
\newcommand{\Prob}{\mbox{\rm Prob}}
\newcommand{\dd}{{\,\mathrm{d}}}
\newcommand{\1}{\mathbf 1}       
\newcommand{\lpp}{\llparenthesis}%
\newcommand{\rpp}{\rrparenthesis}%
\begin{document} 
\author[Kagan]{Abram M.\ Kagan} 
\address{Department of Mathematics, University of Maryland, College Park,
MD 20742, USA}
\email{amk@math.umd.edu}
\author[Malinovsky]{Yaakov Malinovsky}  
\address{Department of Mathematics and Statistics, 
University of Maryland Baltimore County, Baltimore,
MD 2150, USA}
\email{yaakovm@umbc.edu}
\author[Mattner]{Lutz Mattner}    
\address{Universit\"at Trier, Fachbereich IV -- Mathematik, 54286~Trier, 
Germany}
\email{mattner@uni-trier.de}



\title[Partially complete sufficient statistics are jointly complete]
{Partially complete sufficient statistics\\ are jointly  complete}


\begin{abstract} 
The theory of the basic statistical concept of 
(Lehmann-Scheff\'e-)com\-plete\-ness is perfected by providing 
the theorem indicated in the title and previously overlooked 
for several decades. Relations to earlier results are discussed and
illustrating examples are presented.

Of the two proofs offered for the main result, the first is direct 
and short, following the prototypical example of Landers and Rogge~(1976),
and the second is very short and purely statistical, 
utilizing the basic theory of optimal
unbiased estimation in the little known version completed by
Schmetterer and Strasser~(1974).
\end{abstract}

\subjclass[2000]{Primary 62B99; Secondary 62F10, 62G05}

\keywords{Lehmann-Scheff\'e completeness, 
optimal unbiased estimation, profile sufficiency, truncation models,
UMVUE}

\thanks{The work of the second author was partially supported by a 2013 UMBC
Summer Faculty Fellowship grant.}


\date{\today}


\maketitle
\tableofcontents
\section{Introduction, main results, and discussion}\label{Sec:1}
The main purpose of this paper is to provide with Theorem~\ref{Thm:main}
below a result yielding (Lehmann-Scheff\'e-)completeness in possibly 
complicated statistical models as a consequence of completeness
in suitable submodels, and to illustrate the use
of this theorem with short proofs of some classical results. 
The latter include complete sufficiency in models involving truncation,
see Examples~\ref{Example:Intervals_vs_rays} 
and~\ref{Example:Compl_suff_unkknow_trunc}, 
Subexample~\ref{Subexample:Trunc_exp_fam}, 
and Subsubexample~\ref{Subsubex:LS_exponential}, and we present short and 
natural proofs of the requisite auxiliary results 
\ref{Lem:uniform_cap-stable_compl_suff},
\ref{Cor:Taxi_examples}, 
and~\ref{Lem:Compl_suff_weight}.
Example~\ref{Example:Compl_suff_unkknow_trunc} might be new 
in its present natural generality. 

The conclusion of joint sufficiency, absent from Theorem~\ref{Thm:main},
can be added under a homogeneity assumption, see Theorem~\ref{Thm:Main_hom_con} 
and Counterexample~\ref{CounterEx:C1_vee_C2_insuff}. 

A secondary purpose of this paper is  to correct or refute related 
completeness claims from the literature, see below Theorem~\ref{Thm:CKS} and 
the two paragraphs following it.

We present two proofs of  Theorem~\ref{Thm:main}, namely 
in Section~\ref{Sec:2} a short and direct one, 
generalizing the original proof of the prototypical
Example~\ref{Example:Landers_Rogge} given by  Landers and Rogge~(1976),
and, at the end of Section~\ref{Sec:3}, a very short 
and purely statistical one, utilizing 
the theory of optimal unbiased estimation as completed by 
Schmetterer and Strasser~(1974). 
The perhaps surprising possibility of the second proof rests on the 
apparently not too well-known fact that optimality of an unbiased estimator,
in the sense of Definition~\ref{Def:Opt_unb},
is always equivalent to its measurability with respect to a certain complete
but not necessarily sufficient sub-$\sigma$-algebra, 
namely the $\sigma$-algebra $\cO$ associated to the model $\cP$
in the known Theorem~\ref{Thm_RPLSSS}. Theorem~\ref{Thm_RPLSSS} 
and the trivial but useful and known Lemma~\ref{Lem:Bo_1983}  easily 
yield with Theorem~\ref{Thm:Partial_opt_implies_opt} 
a lower bound for $\cO$ in terms of the $\sigma$-algebras  
$\cO_\eta$ corresponding to submodels $\cP_\eta$ forming an exhaustion of $\cP$;
and Theorem~\ref{Thm:Partial_opt_implies_opt}  in turn allows a very short and 
computation free second proof of Theorem~\ref{Thm:main}.

To be more precise, let us introduce some notation and recall  
basic definitions. 
With $\Prob(\cX,\cA)$ denoting the set of all laws on the measurable 
space $(\cX,\cA)$, every set $\cP\subseteq \Prob(\cX,\cA)$ 
is a \defn{(statistical) model} on $(\cX,\cA)$, and then
every $\cQ\subseteq \cP$ is a \defn{submodel}, and every 
family $(\cP_\eta:\eta \in\Eta)$ of submodels $\cP_\eta$ of $\cP$
with $\bigcup_{\eta\in\Eta}\cP_\eta=\cP$
will here be called a \defn{(parametrized) exhaustion} of $\cP$.
The most common example of the latter is given for a model
$\cP=\{P_\theta:\theta\in\Theta\}$ with $\Theta=\Theta_1\times\Theta_2$
by $\cP_\eta:=\{P_{\theta_1,\eta}:\theta_1\in\Theta_1\}$ for 
$\eta\in\Theta_2$, as occurring in Corollary~\ref{Cor:I={1,2}} and there
in particular in assumption~(i), but other cases as in 
Example~\ref{Example:Intervals_vs_rays} are not uncommon.

Let $\cP\subseteq\Prob(\cX,\cA)$ be a model. 
A sub-$\sigma$-algebra $\cC$ of $\cA$ is \defn{complete} for $\cP$
if every $\cC$-measurable function $h:\cX\rightarrow \R$ with 
vanishing expectations under $\cP$, that is,
\la                \label{Eq:Hypothesis_of_completeness}
  Ph&=&0   \quad\text{ for }P\in\cP,
\al
already satisfies
\la                \label{Eq:Conclusion_of_completeness}
  h&=&0   \quad\text{$\cP$-a.s.},
\al
that is, $h=0$ $P$-a.s.~for every $P\in\cP$. The model~$\cP$ itself is 
\defn{complete} if $\cA$ is complete for~$\cP$. A statistic 
$S$ from $(\cX,\cA)$ to some measurable space $(\cY,\cB)$ is 
\defn{complete} for $\cP$
if the $\sigma$-algebra $\sigma(S)$ it generates on $\cX$ is complete for $\cP$. 
While logically unnecessary, statistics are common and often very convenient
for describing sub-$\sigma$-algebras in concrete examples,
such as Subsubexample~\ref{Subsubex:LS_exponential}
below, and hence they, rather than the sub-$\sigma$-algebras, occur 
in the title of the present paper.
 
Completeness as a tool for statistical theory was introduced systematically
by Leh\-mann and Scheff\'e~(1947, 1950, 1955, 1956), after special cases
had been considered before by 
Wald~(1942, 1944), Scheff\'e~(1943), and Halmos~(1946). Its classical use in 
estimation or testing theories is well-known, see Lehmann and Casella~(1998), 
Lehmann and Romano~(2005), and Pfanzagl~(1994) for textbook treatments,
and Matt\-ner and Mattner~(2013, Lemma 4.2) for a simple recent example in 
a rather applied setting. Again recently,
completeness has also been used in the econometric literature for studying
identifiability problems in instrumental regression models, see for example
D'Haultfoeuille~(2011).

However, for any given model and sub-$\sigma$-algebra, completeness can be 
difficult to verify even if strongly suspected. Hence sufficient criteria like 
the following main result of this paper can be useful.

\begin{Thm}   \label{Thm:main}
Let $\cP\subseteq\Prob(\cX,\cA)$ be a model and let $I$ be a set.
For each $i\in I$, let $\cC_i$ be a sub-$\sigma$-algebra of $\cA$ and
$(\cP_{i,\eta}:\eta \in \Eta_i)$ an exhaustion of $\,\cP$ with $\cC_i$ complete
sufficient for each $\cP_{i,\eta}$. Then $\bigvee_{i\in I} \cC_i$ 
is complete for $\cP$. 
\end{Thm}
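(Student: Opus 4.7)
The plan is to show that for every $P\in\cP$ and every $E$ in the $\pi$-system $\cE$ of finite intersections $B_1\cap\cdots\cap B_n$ with $B_k\in\cC_{i_k}$ for some $i_1,\dots,i_n\in I$, one has $\int_E h\dd P=0$. Dynkin's $\pi$--$\lambda$ theorem will then extend this to every $E\in\bigvee_{i\in I}\cC_i$, and specialising $E$ to $\{h>0\}$ and $\{h<0\}$ (both members of this $\sigma$-algebra since $h$ is measurable for it) will force $Ph^+=Ph^-=0$, hence $h=0$ $P$-a.s. Here $h$ is the given $(\bigvee_i\cC_i)$-measurable function with $Qh=0$ for every $Q\in\cP$, which tacitly entails $h\in L^1(Q)$ for every such $Q$.

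\textbf{Key step.} The central input, used once per coordinate of $E$, is the following: for any $i\in I$, any $\cA$-measurable $g$ with $|g|\le|h|$ and $Qg=0$ for every $Q\in\cP$, and any $P\in\cP$, one has $E_P(g\mid\cC_i)=0$ $P$-a.s., and therefore $P(g\1_B)=P(\1_B E_P(g\mid\cC_i))=0$ for every $B\in\cC_i$. To see this, pick $\eta\in\Eta_i$ with $P\in\cP_{i,\eta}$; sufficiency of $\cC_i$ for $\cP_{i,\eta}$ provides a $\cC_i$-measurable version $\tilde g_\eta$ of the conditional expectation $E_Q(g\mid\cC_i)$ valid simultaneously for every $Q\in\cP_{i,\eta}$; then $Q\tilde g_\eta=Qg=0$ for all $Q\in\cP_{i,\eta}$, so completeness of $\cC_i$ for $\cP_{i,\eta}$ forces $\tilde g_\eta=0$ $\cP_{i,\eta}$-a.s., and in particular $E_P(g\mid\cC_i)=0$ $P$-a.s.

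\textbf{Iteration, Dynkin, and main obstacle.} Iterating with $g_0:=h$ and $g_k:=h\1_{B_1}\cdots\1_{B_k}$ (each dominated by $|h|$ and each satisfying $Qg_k=0$ for every $Q\in\cP$ by the previous application) yields $P(h\1_{B_1}\cdots\1_{B_n})=0$, i.e.\ $\int_E h\dd P=0$ for every $E\in\cE$. The class $\{E\in\bigvee_i\cC_i:P(h\1_E)=0\}$ is a $\lambda$-system on $\cX$ (closed under proper differences and monotone limits by dominated convergence with dominant $|h|\in L^1(P)$) containing $\cE$, hence coincides with $\bigvee_i\cC_i$, completing the proof. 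The delicacy to keep track of is that the inductive hypothesis $Qg_k=0$ must hold for \emph{every} $Q\in\cP$ and not just in a single submodel; this is precisely what the key step delivers, thanks to the exhaustion property $\bigcup_{\eta\in\Eta_i}\cP_{i,\eta}=\cP$ letting one apply sufficiency and completeness locally and then harvest a conclusion valid on all of $\cP$. No restriction on the cardinality of $I$ is required, since the $\pi$--$\lambda$ argument handles arbitrary generating systems.
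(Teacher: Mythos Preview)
Your proof is correct and follows essentially the same route as the paper's first proof: use sufficiency plus completeness of $\cC_i$ in each submodel $\cP_{i,\eta}$ to kill the $\cC_i$-conditional expectation, harvest $P(\1_B h)=0$ for all $B\in\cC_i$ and all $P\in\cP$ via the exhaustion, iterate over finitely many indices to reach the $\pi$-system of finite intersections, and then pass to $\bigvee_i\cC_i$ by a $\pi$--$\lambda$/uniqueness argument. The paper compresses your final two paragraphs into a single sentence (``as $\cE$ is a $\cap$-stable generator of $\cC$ with $\cX\in\cE$, $h=0$ $P$-a.s.''), but the substance is identical.
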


Here, of course, $\bigvee_{i\in I} \cC_i$ denotes the supremum of 
$\{\cC_i:i\in I\}$ in the set of all sub-$\sigma$-algebras of $\cA$
partially ordered by inclusion.

The hypothesis~\eqref{Eq:Hypothesis_of_completeness} in the definition
of completeness entails that $h$ belongs to 
\[
 \cL^1(\cP) &:=& \bigcap_{P\in\cP}\cL^1(P),
\]
the set of all functions integrable with respect to every $P\in\cP$.
There are obvious analogues of Theorem~\ref{Thm:main}
and the other results in this paper 
involving $p$-completeness with some $p\in\mathopen]1,\infty\mathclose[$, 
where the implication 
\eqref{Eq:Hypothesis_of_completeness} $\Rightarrow$ 
\eqref{Eq:Conclusion_of_completeness}
is only required for $h\in \cL^p(\cP):=\bigcap_{P\in\cP}\cL^p(P)$,
or bounded completeness, which 
are not spelled out here except for one remark after Theorem~\ref{Thm:CKS}. 

We present two proofs for Theorem~\ref{Thm:main} in Sections~\ref{Sec:2} 
and~\ref{Sec:3} below. Examples and counterexamples are collected in 
Sections~\ref{Sec:Examples} and~\ref{Sec:Counterexamples}.
Let us proceed here by  stating  explicitly  the most 
transparent nontrivial special case of 
Theorem~\ref{Thm:main}, where $I=\{1,2\}$ and $\cP$ is 
parametrized by a cartesian product of two sets:

\begin{Cor}              \label{Cor:I={1,2}}
Let $\cP=\{P_\theta:\theta\in\Theta\}\subseteq\Prob(\cX,\cA)$ 
be a model with $\Theta=\Theta_1\times\Theta_2$ and  let
$\cC_1,\cC_2$ be sub-$\sigma$-algebras 
with these properties:
\begin{Aequiv}
\item For each $\theta_2\in \Theta_2$,  \label{Aequiv:Cor_theta2_fixed}
$\cC_1$ is complete sufficient for  
$\{P_{\theta_1,\theta_2}:\theta_1\in\Theta_1\}$.
\item For each $\theta_1\in \Theta_1$, \label{Aequiv:Cor_theta1_fixed}
$\cC_2$ is complete sufficient for 
$\{P_{\theta_1,\theta_2}:\theta_2\in\Theta_2\}$.
\end{Aequiv}
Then $\cC_1\vee\cC_2$ is complete for~$\cP$.  
\end{Cor}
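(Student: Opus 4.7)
The plan is to deduce the corollary as an immediate specialization of Theorem~\ref{Thm:main} with $I=\{1,2\}$, using the product structure $\Theta=\Theta_1\times\Theta_2$ to manufacture the two exhaustions required by that theorem.

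First I would set $\Eta_1:=\Theta_2$ and, for each $\eta\in\Eta_1$, define the submodel $\cP_{1,\eta}:=\{P_{\theta_1,\eta}:\theta_1\in\Theta_1\}$; symmetrically, set $\Eta_2:=\Theta_1$ and $\cP_{2,\eta}:=\{P_{\eta,\theta_2}:\theta_2\in\Theta_2\}$ for $\eta\in\Eta_2$. A one-line check then shows that $(\cP_{1,\eta}:\eta\in\Eta_1)$ and $(\cP_{2,\eta}:\eta\in\Eta_2)$ are exhaustions of $\cP$ in the sense defined in Section~\ref{Sec:1}: every $P_{\theta_1,\theta_2}\in\cP$ lies in $\cP_{1,\theta_2}$, and symmetrically in $\cP_{2,\theta_1}$, so $\bigcup_{\eta\in\Eta_i}\cP_{i,\eta}=\cP$ for $i\in\{1,2\}$.

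Hypotheses~\ref{Aequiv:Cor_theta2_fixed} and~\ref{Aequiv:Cor_theta1_fixed} of the corollary then say exactly that $\cC_i$ is complete sufficient for each $\cP_{i,\eta}$, $\eta\in\Eta_i$, for $i\in\{1,2\}$. Applying Theorem~\ref{Thm:main} yields completeness of $\bigvee_{i\in\{1,2\}}\cC_i=\cC_1\vee\cC_2$ for $\cP$, which is the desired conclusion.

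I expect no real obstacle: the corollary is in essence a dictionary entry between the \emph{cartesian product} parametrization used here and the \emph{family of exhaustions} formulation of Theorem~\ref{Thm:main}, and the entire substantive content resides in that theorem, whose proofs are deferred to Sections~\ref{Sec:2} and~\ref{Sec:3}. The one point that deserves a moment's care is making sure that the two index sets $\Eta_1$ and $\Eta_2$ are assigned with \emph{swapped} coordinates, which is why in~\ref{Aequiv:Cor_theta2_fixed} one fixes $\theta_2$ and varies $\theta_1$, while in~\ref{Aequiv:Cor_theta1_fixed} the roles are reversed.
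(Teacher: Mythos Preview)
Your proposal is correct and matches the paper's approach exactly: the paper presents Corollary~\ref{Cor:I={1,2}} explicitly as ``the most transparent nontrivial special case of Theorem~\ref{Thm:main}, where $I=\{1,2\}$ and $\cP$ is parametrized by a cartesian product of two sets,'' and the exhaustions you write down are precisely those described in the paragraph preceding the corollary. No separate proof is given in the paper, as the specialization is immediate.
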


One might rephrase for example assumption~\ref{Aequiv:Cor_theta2_fixed}
above as ``$\cC_1$ is partially complete sufficient for $\theta_1$'', 
hence the title of the present paper, and instead of  ``partially'', some 
would prefer ``profile''. 

Theorem~\ref{Thm:main} contains the classical 
Example~\ref{Example:Landers_Rogge} of 
Landers and Rogge~(1976), except for the latter's rather trivial 
``only if'' claim, which can not be added to Theorem~\ref{Thm:main}
by Counterexample~\ref{CounterExp:No_converse}. 
Similarly, the ``if'' claim of 
the special case of Example~\ref{Example:Landers_Rogge} 
where $I=\{1,2\}$ is contained in Corollary~\ref{Cor:I={1,2}},
in a result of R\"uschendorf~(1987, Lemma~1) concerning products involving 
Markov kernels, and in the following Theorem~\ref{Thm:CKS}. 
We recall that a statistical model is called \defn{homogeneous},
if its members are mutually absolutely continuous.

\begin{Thm}[essentially Cramer, Kamps, Schenk, 2002] \label{Thm:CKS} 
Let $\Theta_1,\Theta_2$ be sets and let 
$\cQ:= \{Q^{}_{\theta_1} : \theta_1 \in \Theta_1\}\subseteq\Prob(\cX_1,\cA_1)$,
$\cR := \{R_{\theta_1,\theta_2} : \theta_1\in\Theta_1,\theta_2\in\Theta_2\}
\subseteq\Prob(\cX_2,\cA_2)$, $\cP:=\{Q^{}_{\theta_1}\otimes 
R^{}_{\theta_1,\theta_2}:\theta_1\in\Theta_1,\theta_2\in\Theta_2\}$ 
be models  with these properties:
\begin{Aequiv}
 \item $\cQ$ is complete.  \label{Aequiv:CKS_Q_complete}
 \item For each $\theta_1\in\Theta_1$, \label{Aequiv:CKS_R_part_complete}
    $\{R_{\theta_1,\theta_2} :\theta_2\in\Theta_2\}$ is complete.
 \item For each $\theta_2\in\Theta_2$,   \label{Aequiv:CKS_homogeneous}
  $\{R_{\theta_1,\theta_2} :\theta_1\in\Theta_1\}$ is homogeneous.
 \item                                    \label{Aequiv:CKS_integrable}
  $\cL^{1}(\cP) =   \cL^1(\{Q_{\theta_1}\otimes R^{}_{\theta_1',\theta_2} :
 \theta_1,\theta_1'\in\Theta_1, \theta_2\in\Theta_2 \})$. 
\end{Aequiv}
Then $\cP$ is complete.
\end{Thm}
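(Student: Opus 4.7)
The plan is to reduce Theorem~\ref{Thm:CKS} to Corollary~\ref{Cor:I={1,2}} by first enlarging $\cP$ to the \emph{decoupled} model
\[
\cP' &:=& \{Q^{}_{\theta_1}\otimes R^{}_{\theta_1',\theta_2}:
\theta_1,\theta_1'\in\Theta_1,\, \theta_2\in\Theta_2\},
\]
which differs from $\cP$ only in that the two occurrences of the first parameter are allowed to vary independently. The scheme is: show $\cP'$ is complete by applying Corollary~\ref{Cor:I={1,2}}; show that every $h\in\cL^1(\cP)$ with $Ph=0$ for $P\in\cP$ also satisfies $P'h=0$ for every $P'\in\cP'$; and conclude $h=0$ $\cP'$-a.s., hence $h=0$ $\cP$-a.s.

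For the first step I take $\cC_1:=\cA_1\otimes\{\emptyset,\cX_2\}$ and $\cC_2:=\{\emptyset,\cX_1\}\otimes\cA_2$, the pullbacks of $\cA_1$ and $\cA_2$ to $\cX_1\times\cX_2$, and parametrize $\cP'$ by $\Theta_1\times(\Theta_1\times\Theta_2)$, with first parameter $\theta_1$ and second parameter $(\theta_1',\theta_2)$. Fixing $(\theta_1',\theta_2)$, the submodel $\{Q_{\theta_1}\otimes R_{\theta_1',\theta_2}:\theta_1\in\Theta_1\}$ has constant second factor, so $\cC_1$ is sufficient for it, and its completeness is immediate from~\ref{Aequiv:CKS_Q_complete}. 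Fixing $\theta_1$, the submodel $\{Q_{\theta_1}\otimes R_{\theta_1',\theta_2}:(\theta_1',\theta_2)\in\Theta_1\times\Theta_2\}$ has constant first factor, so $\cC_2$ is sufficient, and its completeness follows by applying~\ref{Aequiv:CKS_R_part_complete} once for each $\theta_1'$. Corollary~\ref{Cor:I={1,2}} therefore gives that $\cC_1\vee\cC_2=\cA_1\otimes\cA_2$ is complete for $\cP'$.

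For the second step, fix $h\in\cL^1(\cP)$ with $Ph=0$ for $P\in\cP$; by~\ref{Aequiv:CKS_integrable} also $h\in\cL^1(\cP')$. Fubini makes
\[
f_{\theta_1}(x_2) &:=& \int h(x_1,x_2)\,Q_{\theta_1}(\dd x_1)
\]
well-defined outside a set that is $R_{\theta_1',\theta_2}$-null for every $\theta_1',\theta_2$, and turns the hypothesis into $\int f_{\theta_1}\,\dd R_{\theta_1,\theta_2}=0$ for every $\theta_2$. Assumption~\ref{Aequiv:CKS_R_part_complete} then yields $f_{\theta_1}=0$ $R_{\theta_1,\theta_2}$-a.s.\ for every $\theta_2$; assumption~\ref{Aequiv:CKS_homogeneous} transports this to $f_{\theta_1}=0$ $R_{\theta_1',\theta_2}$-a.s.\ for every $\theta_1'$; and a second use of Fubini gives $\int h\,\dd(Q_{\theta_1}\otimes R_{\theta_1',\theta_2})=\int f_{\theta_1}\,\dd R_{\theta_1',\theta_2}=0$ for all $\theta_1,\theta_1',\theta_2$. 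Hence $P'h=0$ for every $P'\in\cP'$, so by completeness of $\cP'$ one obtains $h=0$ $\cP'$-a.s., and in particular $h=0$ $\cP$-a.s.

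The principal obstacle is conceptual rather than computational: Corollary~\ref{Cor:I={1,2}} does not directly handle the entangled model $\cP$, because both marginals vary with $\theta_1$, but it applies cleanly to the decoupled $\cP'$. Hypothesis~\ref{Aequiv:CKS_homogeneous} is precisely what is needed to promote the zero-integral condition from the ``diagonal'' $\theta_1'=\theta_1$ to all of $\Theta_1\times\Theta_1$, and~\ref{Aequiv:CKS_integrable} is exactly the integrability buffer that keeps Fubini and the passage from $\cP$ to $\cP'$ inside the domain of definition of~$h$.
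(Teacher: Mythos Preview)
Your proof is correct and matches the paper's argument essentially step for step: the paper (in Subexample~\ref{Subexample:CKS}) also passes to the decoupled model, uses Fubini, assumption~\ref{Aequiv:CKS_R_part_complete}, homogeneity~\ref{Aequiv:CKS_homogeneous}, and then~\ref{Aequiv:CKS_integrable} plus Fubini again to obtain~\eqref{Eq:Proof_CKS_2}. The only cosmetic difference is that the paper invokes Example~\ref{Example:Landers_Rogge} (Landers--Rogge) for the completeness of $\cP'=\cQ\otimes\cR$, whereas you establish it via Corollary~\ref{Cor:I={1,2}}; since the paper's own proof of Example~\ref{Example:Landers_Rogge} goes through Theorem~\ref{Thm:main}, of which Corollary~\ref{Cor:I={1,2}} is the $I=\{1,2\}$ case, you are simply unwinding one citation.
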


Theorem~\ref{Thm:CKS} is proved in Subexample~\ref{Subexample:CKS},
just after explaining how it implies Example~\ref{Example:Landers_Rogge} 
with $I=\{1,2\}$.

Except for a slightly different notation, Theorem~\ref{Thm:CKS}
in its present formulation differs from a claim of 
Cramer et al.~(2002, Theorem~2 and Remark~2) exactly by the addition of the 
integrability assumption~\ref{Aequiv:CKS_integrable}. The relevance of this 
assumption in the two proofs of Theorem~\ref{Thm:CKS} known to us
is explained by giving the new one of them in Subexample~\ref{Subexample:CKS} and 
commenting on the other one afterwards. Whether Theorem~\ref{Thm:CKS} would 
remain true if~\ref{Aequiv:CKS_integrable} were omitted seems to be unknown. 
Counterexample~\ref{CounterEx:Hom_CKS} shows that the homogeneity 
assumption~\ref{Aequiv:CKS_homogeneous} can not be omitted.
A version of Theorem~\ref{Thm:CKS} proposed by San Martin and Mouchart~(2007, 
Theorem~2.1) remains wrong even if assumption~\ref{Aequiv:CKS_integrable}
is added, see Counterexample~\ref{Couterex:San-M_Mouchart}. 

The problem with assumption~\ref{Aequiv:CKS_integrable} in 
Theorem~\ref{Thm:CKS} is a good illustration of the fact that bounded
completeness, where the implication  
\eqref{Eq:Hypothesis_of_completeness} $\Rightarrow$ 
\eqref{Eq:Conclusion_of_completeness} is required only for bounded
$\cC$-measurable functions $h$, is often much simpler to treat:
If we replace in Theorem~\ref{Thm:CKS} every ``complete'' by 
``boundedly complete'', then assumption~\ref{Aequiv:CKS_integrable}
can be omitted without substitute, as becomes clear by considering 
either proof of Theorem~\ref{Thm:CKS}.

To ease now the comparison of  Corollary~\ref{Cor:I={1,2}} 
with Theorem~\ref{Thm:CKS}, let us rewrite the latter in the style of the 
former, while reformulating part of the hypothesis using Basu 
theorems due to Basu~(1955) and Kagan~(1966).

\begin{Thm}[a rewrite of Theorem~\ref{Thm:CKS}]  \label{Thm:CKS_rewrite}
Let $\cP=\{P_\theta:\theta\in\Theta \}\subseteq\Prob(\cX,\cA)$ be a model
with $\Theta=\Theta_1\times\Theta_2$, and let  
$\cC_1,\cC_2$ be sub-$\sigma$-algebras with these properties:
\begin{Aequiv}
 \item For each $\theta_2\in\Theta_2$,  $\cC_1$ is complete for 
  $\{P_{\theta_1,\theta_2}:\theta_1\in\Theta_1\}$.
 \item                             \label{Aequiv:CKS_rewrite_theta1_fixed}
  For each $\theta_1\in\Theta_1$, $\cC_1$ is ancillary and $\cC_2$ is
  complete sufficient for   $\{P_{\theta_1,\theta_2}:\theta_2\in\Theta_2\}$.
 \item For each $\theta_2\in\Theta_2$, \label{Aequiv:CKS_rewrite_homogeneous} 
 $\{P_{\theta_1,\theta_2}|_{\cC_2}:\theta_1\in\Theta_1\}$ is homogeneous.
 \item             \label{Aequiv:CKS_rewrite_integrable}
  $\cL^1(\left\{P_\theta|_{\cC_1}\otimes P_\theta|_{\cC_2}:\theta\in\Theta\right\})
   = \cL^1(\left\{P_\theta|_{\cC_1}\otimes P_{\theta'}|_{\cC_2}:
    \theta,\theta'\in\Theta\right\})$. 
\end{Aequiv}
Then $\cC_1\vee\cC_2$ is complete for~$\cP$.  
\end{Thm}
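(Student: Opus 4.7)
The plan is to deduce Theorem~\ref{Thm:CKS_rewrite} from Theorem~\ref{Thm:CKS} itself by means of Basu's theorem (Basu~(1955)): after using Basu to factorize each $P_\theta$ into a product measure on $(\cX\times\cX,\cC_1\otimes\cC_2)$, the completeness of $\cC_1\vee\cC_2$ for $\cP$ becomes the completeness of the resulting product model, to which Theorem~\ref{Thm:CKS} applies directly. Throughout I would work with the diagonal map $\phi:\cX\rightarrow\cX\times\cX$, $\phi(x):=(x,x)$, which satisfies $\phi^{-1}(\cC_1\otimes\cC_2)=\cC_1\vee\cC_2$ and thus measurably links the two settings.

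First, by~\ref{Aequiv:CKS_rewrite_theta1_fixed}, for each fixed $\theta_1$, $\cC_2$ is complete sufficient and $\cC_1$ ancillary for the submodel $\{P_{\theta_1,\theta_2}:\theta_2\in\Theta_2\}$, so Basu's theorem yields $P_{\theta_1,\theta_2}$-independence of $\cC_1$ and $\cC_2$. The ancillarity makes $Q_{\theta_1}:=P_{\theta_1,\theta_2}|_{\cC_1}$ independent of $\theta_2$; setting also $R_{\theta_1,\theta_2}:=P_{\theta_1,\theta_2}|_{\cC_2}$, the independence gives $\phi_{\ast}P_\theta=Q_{\theta_1}\otimes R_{\theta_1,\theta_2}$ for every $\theta=(\theta_1,\theta_2)\in\Theta$. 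Next I would verify that the four hypotheses of Theorem~\ref{Thm:CKS} hold with $\cQ:=\{Q_{\theta_1}:\theta_1\in\Theta_1\}$ on $(\cX,\cC_1)$ and $\cR:=\{R_{\theta_1,\theta_2}:\theta_1\in\Theta_1,\theta_2\in\Theta_2\}$ on $(\cX,\cC_2)$: \ref{Aequiv:CKS_Q_complete} follows from hypothesis~(i) of Theorem~\ref{Thm:CKS_rewrite} at any fixed $\theta_2$ (since $Q_{\theta_1}$ is independent of $\theta_2$); \ref{Aequiv:CKS_R_part_complete} is the completeness half of~\ref{Aequiv:CKS_rewrite_theta1_fixed}; \ref{Aequiv:CKS_homogeneous} is~\ref{Aequiv:CKS_rewrite_homogeneous}; and, under the identification $P_\theta|_{\cC_1}\otimes P_{\theta'}|_{\cC_2}=Q_{\theta_1}\otimes R_{\theta_1',\theta_2'}$, \ref{Aequiv:CKS_integrable} is precisely~\ref{Aequiv:CKS_rewrite_integrable}. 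Theorem~\ref{Thm:CKS} then delivers completeness of $\{Q_{\theta_1}\otimes R_{\theta_1,\theta_2}:\theta\in\Theta\}$ on $(\cX\times\cX,\cC_1\otimes\cC_2)$.

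Finally, any $\cC_1\vee\cC_2$-measurable $h\in\cL^1(\cP)$ with $P_\theta h=0$ for all $\theta$ factors by Doob--Dynkin as $h=\tilde h\circ\phi$ for some $\cC_1\otimes\cC_2$-measurable $\tilde h$; then $h\in\cL^1(\cP)$ translates into $\tilde h\in\cL^1(\{Q_{\theta_1}\otimes R_{\theta_1,\theta_2}:\theta\in\Theta\})$, which by~\ref{Aequiv:CKS_rewrite_integrable} already places $\tilde h$ in $\cL^1$ of the full product model, while $\int\tilde h\,\dd(Q_{\theta_1}\otimes R_{\theta_1,\theta_2})=P_\theta h=0$. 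The product completeness proved in the preceding paragraph then forces $\tilde h=0$ a.s.\ under every $Q_{\theta_1}\otimes R_{\theta_1,\theta_2}$, and consequently $h=0$ $\cP$-a.s., as required. The main delicacy is exactly this last $\cL^1$-translation, which is where hypothesis~\ref{Aequiv:CKS_rewrite_integrable} becomes indispensable and mirrors the role of~\ref{Aequiv:CKS_integrable} in Theorem~\ref{Thm:CKS}.
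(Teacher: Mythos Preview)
Your proposal is correct and follows essentially the same route as the paper's own proof: both invoke Basu's theorem to obtain $\cP$-independence of $\cC_1$ and $\cC_2$, define $Q_{\theta_1}:=P_{\theta_1,\theta_2}|_{\cC_1}$ and $R_{\theta_1,\theta_2}:=P_{\theta_1,\theta_2}|_{\cC_2}$, and then reduce to Theorem~\ref{Thm:CKS} by factoring any $\cC_1\vee\cC_2$-measurable $h$ through the product space $(\cX\times\cX,\cC_1\otimes\cC_2)$---your diagonal map $\phi$ is precisely the paper's $(f_1,f_2)$ with $f_i$ the identity $(\cX,\cA)\to(\cX,\cC_i)$. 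Your treatment is in fact slightly more explicit than the paper's about how hypothesis~\ref{Aequiv:CKS_rewrite_integrable} enters, but the underlying argument is the same.
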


Thus Theorem~\ref{Thm:CKS_rewrite} has, in comparison  
to Corollary~\ref{Cor:I={1,2}}, the  advantage of 
no sufficiency condition on $\cC_1$, 
but the disadvantage of the ancillarity condition on $\cC_1$
in assumption~\ref{Aequiv:CKS_rewrite_theta1_fixed}
and the additional assumptions~
\ref{Aequiv:CKS_rewrite_homogeneous},\ref{Aequiv:CKS_rewrite_integrable}.
Of course, as for Theorem~\ref{Thm:CKS}, it appears unknown whether 
assumption~\ref{Aequiv:CKS_rewrite_integrable} may be omitted in 
Theorem~\ref{Thm:CKS_rewrite}. 

While it  seems to us that Corollary~\ref{Cor:I={1,2}} is more 
frequently applicable than  Theorem~\ref{Thm:CKS} in either formulation,
one might try to look for  a natural common generalization.
Counterexample~\ref{CounterExp:Suff_needed}
shows that it is not possible to just omit the sufficiency assumption 
concerning $\cC_1$ in Corollary~\ref{Cor:I={1,2}}, even if the 
conditions~\ref{Aequiv:CKS_rewrite_homogeneous} 
and~\ref{Aequiv:CKS_rewrite_integrable} of Theorem~\ref{Thm:CKS_rewrite}
were added. In other words: Theorem~\ref{Thm:CKS_rewrite} would become 
false if the ancillarity condition in its 
assumption~\ref{Aequiv:CKS_rewrite_theta1_fixed} were omitted.

As remarked in the previous paragraph,
even in the special situation of Corollary~\ref{Cor:I={1,2}}, sufficiency may 
not be omitted in the hypothesis. Without any additional 
assumption, it may neither 
be added in the conclusion by Counterexample~\ref{CounterEx:C1_vee_C2_insuff}.
This suggests that it  should be impossible to state Theorem~\ref{Thm:main} 
just for the case of $I=\{1,2\}$ and refer to a simple  induction argument for 
the case of a general finite $I$. It further shows that the homogeneity 
assumption in Kagan's~(2006, Theorem~2.1) sharpening of the factorization
theorem can not be omitted. 
Assuming then homogeneity and a certain connectedness 
property of our exhaustions, we get the following
result, of which, to our surprise, we could not even find its part (a) in 
the literature.

\begin{Thm}                            \label{Thm:Main_hom_con} 
Let $\cP\subseteq\Prob(\cX,\cA)$ be an homogeneous model
and let $I$ be a set. For each $i\in I$, let $\cC_i$ be a 
sub-$\sigma$-algebra of $\cA$ and $(\cP_{i,\eta}:\eta\in\Eta_i)$ an exhaustion
of $\cP$. Assume that the following property holds:

If $P',P''\in\cP$, then there exist 
$n\in\N$ and $P_1,\ldots,P_n\in\cP$
with $P_1=P'$, $P_n=P''$, and such that for each $k\in\{1,\ldots,n-1\}$
there exist $i\in I$ and  $\eta\in\Eta_i$ with $P_k,P_{k+1}\in\cP_{i,\eta}$.
 
\smallskip\noindent{\rm\textbf{(a)}}
For each $i$, let $\cC_i$ be sufficient for each $\cP_{i,\eta}$.
Then $\bigvee_{i\in I}\cC_i$ is sufficient for $\cP$.

\smallskip\noindent{\rm\textbf{(b)}}
For each $i$, let $\cC_i$ be minimal sufficient for each $\cP_{i,\eta}$.
Then $\bigvee_{i\in I}\cC_i$ is minimal sufficient for $\cP$.

\smallskip\noindent{\rm\textbf{(c)}}
For each $i$, let $\cC_i$ be complete sufficient for each $\cP_{i,\eta}$.
Then $\bigvee_{i\in I}\cC_i$ is complete sufficient for $\cP$.
\end{Thm}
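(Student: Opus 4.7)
The plan is to push everything through Radon-Nikodym densities, exploiting the connectedness hypothesis in~(a), restriction to submodels in~(b), and the already proved Theorem~\ref{Thm:main} in~(c). Since $\cP$ is homogeneous, any $P_0\in\cP$ dominates $\cP$, and each submodel $\cP_{i,\eta}\subseteq\cP$ is then also homogeneous and dominated. Under this domination the standard Halmos-Savage characterization applies: a sub-$\sigma$-algebra $\cD$ is sufficient for $\cP$ iff for every $P\in\cP$ the derivative $dP/dP_0$ has a $\cD$-measurable version, and analogously for each $\cP_{i,\eta}$. Moreover, ``mod $P$-null'' (for any $P\in\cP$), ``mod $\cP$-null'', and ``mod $\cP_{i,\eta}$-null'' all coincide by homogeneity, which is exactly what will allow local $\cC_i$-measurable densities to be pasted into one global $\bigvee_{i\in I}\cC_i$-measurable density along a connecting chain.

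For~(a), fix $P_0,P\in\cP$ and use the connectedness assumption to obtain a chain $P_0=P_1,\ldots,P_n=P$ in $\cP$ with consecutive pairs $P_k,P_{k+1}\in\cP_{i_k,\eta_k}$. Sufficiency of $\cC_{i_k}$ for the homogeneous family $\cP_{i_k,\eta_k}$ yields a $\cC_{i_k}$-measurable version $\rho_k$ of $dP_{k+1}/dP_k$, and the Radon-Nikodym chain rule then makes $\prod_{k=1}^{n-1}\rho_k$ a $\bigvee_{i\in I}\cC_i$-measurable version of $dP/dP_0$. By the characterization above, $\bigvee_{i\in I}\cC_i$ is sufficient for $\cP$.

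For~(b), sufficiency is inherited from~(a); for minimality, let $\cD'$ be any sufficient sub-$\sigma$-algebra for $\cP$. Since sufficiency passes to submodels, $\cD'$ is sufficient for each $\cP_{i,\eta}$, and minimality of $\cC_i$ there forces $\cC_i\subseteq\cD'$ modulo $\cP_{i,\eta}$-null, hence modulo $\cP$-null by homogeneity. Joining over $i$ gives $\bigvee_{i\in I}\cC_i\subseteq\cD'$ modulo $\cP$-null. Part~(c) is then immediate: (a) supplies sufficiency of $\bigvee_{i\in I}\cC_i$ for $\cP$, while completeness is delivered directly by Theorem~\ref{Thm:main}, whose hypotheses are precisely those of~(c).

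The main obstacle is the null-set bookkeeping in~(a): each density $\rho_k$ is determined only modulo the null sets of its own measure, so one must rely on the coincidence of these notions across the whole chain to conclude that the finite product $\prod_k\rho_k$ represents $dP/dP_0$ unambiguously mod $\cP$-null. Homogeneity is exactly what ensures this. A direct conditional-expectation approach would face the parallel difficulty of stitching local $\cC_i$-measurable versions of $E_P[f\mid\cC_i]$ into a single $\bigvee_{i\in I}\cC_i$-measurable version of $E_P[f\mid\bigvee_{i\in I}\cC_i]$, which the density-based route bypasses cleanly through the chain rule.
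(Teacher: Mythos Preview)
Your proof is correct and follows essentially the same route as the paper's: both arguments build $\bigvee_{i\in I}\cC_i$-measurable density ratios $dP/dP_0$ by writing them, via the connecting chain, as finite products of local ratios $dP_{k+1}/dP_k$ that are $\cC_{i_k}$-measurable by partial sufficiency, with homogeneity aligning all null-set qualifications; the paper phrases this through Bahadur's minimal sufficient $\sigma$-algebra generated by density ratios, whereas you invoke the Halmos--Savage factorization characterization directly, but the content is the same. Part~(c) is handled identically in both, by combining~(a) with Theorem~\ref{Thm:main}.
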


Note that the connectedness assumption concerning the exhaustions
in  Theorem~\ref{Thm:Main_hom_con} in particular holds if $I$ is finite, 
$\cP=\{ P_\theta:\theta\in\bigtimes_{i\in I}\Theta_i\}$,
$\Eta_i=\bigtimes_{j\in I\setminus\{i\}}\Theta_j$,
and each $\cP_{i,\eta}$ is obtained by fixing all but the $i$th of 
the coordinates of $\theta$ to coincide with those of $\eta$,
as in Corollary~\ref{Cor:I={1,2}} where $I=\{1,2\}$.
In the latter case, by an obvious modification of the proof of 
Theorem~\ref{Thm:Main_hom_con} given below, the hypothesis 
in~\ref{Thm:Main_hom_con}(a) can in fact be weakened to assuming
sufficiency of $\cC_1$ for $\{P_{\theta_1,\theta_2}: \theta_1\in\Theta_1\}$
for each $\theta_2\in\Theta_2$ as before, but  sufficiency of $\cC_2$ for  
$\{P_{\theta_1,\theta_2}: \theta_2\in\Theta_2\}$ for just one $\theta_1\in\Theta_1$.

Let us finally mention that the paper of Oosterhoff and Schriever~(1987)
contains many interesting examples loosely related to the topic of the  present paper.
\section{Proofs: Theorem~\ref{Thm:main} from scratch,
equivalence of Theorems~\ref{Thm:CKS} and~\ref{Thm:CKS_rewrite},
Theorem~\ref{Thm:Main_hom_con}}                                 \label{Sec:2}
Here we present our first proof for Theorem~\ref{Thm:main}, which generalizes
the original proof 
of Example~\ref{Eample:Landers_Rogge} without being any longer.

\begin{proof}[First proof of Theorem~\ref{Thm:main}] Let 
$h:\cX\rightarrow\R$ be 
measurable with respect to $\cC:=\bigvee_{i\in I} \cC_i$ 
and satisfy~\eqref{Eq:Hypothesis_of_completeness}.

Let $i\in I$. For $\eta\in\Eta_i$, the sufficiency of $\cC_i$ for 
$\cP_{i,\eta}$ yields a $g_\eta \in \bigcap_{P\in\cP_{i,\eta}}P(h \,\pmb|\, \cC_i)$, 
for  which~\eqref{Eq:Hypothesis_of_completeness} yields 
\[
  P g_\eta &=& 0 \quad\text{ for } P \in \cP_{i,\eta}
\]
and hence, using the completeness of $\cC_i$ for $\cP_{i,\eta}$, 
\[
  g_\eta &=& 0 \quad \cP_{i,\eta}\text{-a.s.},
\]
and thus $ P\1_C h=  P \1_C g_\eta = 0$ for $C\in\cC_i$ and $P \in \cP_{i,\eta}$. 
Hence, since $(\cP_{i,\eta}:\eta \in \Eta_i)$
exhausts $\cP$, for every $C\in\cC_i$ the 
assumption~\eqref{Eq:Hypothesis_of_completeness} also 
holds with $h$ replaced by $\1_Ch$.

Inductively repeating the above argument for different $i$ yields
\[                          \label{Eq:Hypothesis_of_completeness_with_C}
\qquad   P \1_E  h &=& 0 \quad\text{ for } 
   E\in \left\{\bigcap_{i\in I_0}C_i : I_0\subseteq I\text{ finite, }
  C_i\in\cC_i \right\}=:\cE
  \text{ and } P\in\cP
\]
and hence, as  $\cE$ is a $\cap$-stable generator of $\cC$
with $\cX\in\cE$,  $h=0$ $P$-a.s.~for $P\in\cP$.
\end{proof}

\begin{proof}[Proof that Theorems~\ref{Thm:CKS} and~\ref{Thm:CKS_rewrite}
are equivalent]
There is clearly no loss of generality in assuming 
$\cC_1\vee \cC_2=\cA$ in Theorem~\ref{Thm:CKS_rewrite}, which we will do
in this proof. Using  Basu~(1982, Theorems 1 and 3), with the first cited 
theorem due to Basu~(1955, Theorem 2) and the second due to Kagan~(1966)
and also proved by Barra~(1971, see Theorem 3 on pp.~26--27 of the 1981 English
edition),  we note that 
condition~\ref{Thm:CKS_rewrite}\ref{Aequiv:CKS_rewrite_theta1_fixed}
is equivalent to 
\smallskip
\begin{Aequiv}
 \item[(ii$'$)] $\cC_1$, $\cC_2$ are $\cP$-independent and,  
  for each $\theta_1\in\Theta_1$, $\cC_1$ is ancillary and $\cC_2$ is
  complete for   $\{P_{\theta_1,\theta_2}:\theta_2\in\Theta_2\}$.
\end{Aequiv}

\smallskip\noindent
Although not needed here, let us mention that, if we 
assume~\ref{Thm:CKS_rewrite}\ref{Aequiv:CKS_rewrite_homogeneous}, then
(ii$'$) is further equivalent to    
\begin{Aequiv}
\smallskip
\item[(ii$''$)] $\cC_1$, $\cC_2$ are $\cP$-independent and,  
  for each $\theta_1\in\Theta_1$, $\cC_2$ is
  complete sufficient for   $\{P_{\theta_1,\theta_2}:\theta_2\in\Theta_2\}$.
\end{Aequiv}

\smallskip\noindent
To check this, one can apply  Basu~(1982, Theorem 2), 
say in the version of Koehn and Thomas~(1975, Corollary),
noting that there the non-splitting assumption may equivalently be 
imposed on the model restricted to the sufficient $\sigma$-algebra,
as in Basu's~(1958) original version. 

Theorem~\ref{Thm:CKS} follows from Theorem~\ref{Thm:CKS_rewrite}
with (ii$'$) in place of~\ref{Aequiv:CKS_rewrite_theta1_fixed}
by letting $\cC_i$ denote the $\sigma$-algebra generated
by the $i$th coordinate projection in $(\cX_1\times\cX_2,\cA_1\otimes\cA_2)$.
 
Conversely Theorem~\ref{Thm:CKS_rewrite} with (ii$'$) in place 
of~\ref{Aequiv:CKS_rewrite_theta1_fixed} follows from Theorem~\ref{Thm:CKS} 
with $Q^{}_{\theta_1}:=P_{\theta_1,\theta_2}|_{\cC_1}$ and 
$R_{\theta_1,\theta_2} := P_{\theta_1,\theta_2}|_{\cC_2}$ for $\theta\in\Theta$, 
and by observing that  every $\cC_1\vee\cC_2$-measurable
$h:\cX\rightarrow\R$ is of the form 
$h(x)=g(f_1(x),f_2(x))$ for $x\in\cX$ with a 
$\cC_1\otimes\cC_2$-measurable function $g:\cX\times\cX\rightarrow \R$
and $f_i$ denoting the identity from $(\cX,\cA)$ to $(\cX,\cC_i)$.
\end{proof}
\begin{proof}[Proof of Theorem~\ref{Thm:Main_hom_con}] 
By homogeneity, there is a $\sigma$-finite measure $\mu$ on $(\cX,\cA)$ such 
that each $P\in\cP$ has some $\mathopen]0,\infty\mathclose[$-valued 
$\mu$-density $f_P$. Then, by Bahadur's~(1954, Section 6) version of a result 
of Lehmann and Scheff\'e~(1950, Section~6) as presented in 
Torgersen~(1991, p.~69, Theorem~1.5.9), the $\sigma$-algebra 
\[
  \cC &:=&\sigma(\cF)\quad \text{ with }
  \cF\,\,\,:=\,\,\,\left\{\frac{f^{}_{P'}}{f^{}_{P''}} \,:\,P',P''\in\cP\right\}
\]
is minimal sufficient for $\cP$, and, for each $i\in I$ and $\eta\in\Eta_i$,
 \[
  \cC_{i,\eta} &:=&\sigma(\cF_{i,\eta})\quad \text{ with }
  \cF_{i,\eta}\,\,\,:=\,\,\,\left\{\frac{f^{}_{P'}}{f^{}_{P''}} \,:\,P',P''\in\cP_{i,\eta}\right\}
\]
is minimal sufficient for $\cP_{i,\eta}$. 

(a) For each $i\in I$, the sufficiency assumption on $\cC_i$ yields for each $\eta\in \Eta_i$
first $\cC_{i,\eta} \subseteq \cC_i$ $[\cP_{i,\eta}]$, and then by homogeneity 
even  $\cC_{i,\eta} \subseteq \cC_i$ $[\cP]$, and hence we get 
\[
 \cC'&:=&  \bigvee_{i\in I}\bigvee_{\eta\in\Eta_i}\cC_{i,\eta}
 \,\,\,\subseteq\,\,\,\bigvee_{i\in I}\cC_i \quad [\cP].
\]

Let now $g=f^{}_{P'}/f^{}_{P''}\in \cF$. Choose $n\in\N$ and  $P_1=P',\ldots,P_n=P''$
as assumed to exist. Then $ g = \prod_{k=1}^{n-1} f^{}_{P_k}/f^{}_{P_{k+1}}$
is a product of functions each belonging to some $\cF_{i,\eta}$, and hence
$g$ is $\cC'$-measurable. Thus $\cC\subseteq \cC'$. Hence $\bigvee_{i\in I}\cC_i$
inherits sufficiency for $\cP$ from  its almost sure sub-$\sigma$-algebra $\cC$. 
 
(b) Keeping the notation of part (a), we also have $\cC'\subseteq \cC$ trivially
and hence $\cC=\cC'$.  
The stronger minimal sufficiency assumption on $\cC_i$ now even yields
$\cC_{i,\eta} = \cC_i$ $[\cP]$ for each $i$ and $\eta$, and hence
$\cC'= \bigvee_{i\in I}\cC_i$ $[\cP]$. Hence $ \bigvee_{i\in I}\cC_i$ inherits
minimal sufficiency for $\cP$ from $\cC$.
 
(c) Clear by combining part (a) with Theorem~\ref{Thm:main}.
\end{proof}

\section{A shorter proof of Theorem~\ref{Thm:main} via
optimal unbiased estimation}                                \label{Sec:3}
In this section, we give our second and very short proof of 
Theorem~\ref{Thm:main}
by using what we 
regard as the main version of the
basic theory of optimal  mean unbiased estimation,
for univariate estimands, as completed by Schmetterer and Strasser~(1974).
In spite of its conciseness and elegance,  
this theory in its entirety appears to be not widely known, 
and it is indeed not presented even in the union of 
the books on mathematical statistics we are aware of and which, like  
Schmetterer~(1974), Strasser~(1985), Witting~(1985), 
Pfanzagl~(1994), Witting and M\"uller-Funk~(1995),
Lehmann and Casella~(1998), and Bahadur~(2002),   
treat unbiased estimation  more thoroughly than others. 
Hence we proceed to give a brief summary in Theorem~\ref{Thm_RPLSSS} below.

\begin{Def}  \label{Def:Opt_unb}
Let $\cP\subseteq\Prob(\cX,\cA)$ be a model and
\[
  \cE &:=& \cL^1(\cP) \,\,\,=\,\,\,\bigcap_{P\in\cP}\cL^1(P)
\]
be the vector space of all measurable functions $g:\cX\rightarrow \R$ 
being integrable with respect to every $P\in\cP$. Then, for any function
$\kappa:\cP\rightarrow\R$, the elements of 
$\cE_{\kappa}:=\{ g\in\cE : Pg = \kappa(P)\text{ for }P\in\cP\}$ 
are called \defn{unbiased estimators} of the \defn{estimand} $\kappa$, and a 
$\hat{\kappa}\in\cE_\kappa$ is called \defn{optimal unbiased for $\kappa$}, if  
\[
 P\,\phi\circ(\hat{\kappa}-\kappa(P)) &\le& P\,\phi\circ(g-\kappa(P))
 \quad\text{ for } g\in\cE_\kappa \text{ and } P\in\cP 
\]
holds 
for every convex function $\phi:\R\rightarrow \R$. Finally, 
a $g\in\cE$ is called \defn{optimal unbiased}, without reference to 
 any estimand, if $g$ is optimal unbiased for its own 
expectation $P\mapsto Pg$.
\end{Def}

\begin{Thm}[Rao, Blackwell, Lehmann, Scheff\'e, Bahadur, Schmetterer, Strasser]
 \label{Thm_RPLSSS}
Let $\cP\subseteq\Prob(\cX,\cA)$ be a model and let
\[
  \cO &:=& \{A\in\cA : P\1_Ah=0 \text{ for }h\in\cE_0\text{ and }P\in\cP\},
\]
where $\cE_0$ is as in Definition~\ref{Def:Opt_unb} with $\kappa=0$.

\smallskip\noindent{\rm\textbf{(a)}}
$\cO$ is a sub-$\sigma$-algebra of $\cA$ and contains all $\cP$-null sets.

\smallskip\noindent{\rm\textbf{(b)}}
An estimator $\hat{\kappa}\in\cE$ is optimal unbiased iff it is 
$\cO$-measurable, and this is the case  iff 
$\hat{\kappa}\in \bigcap_{P\in\cP}P( \tilde{\kappa}  \,\pmb|\,\cO)$
holds for every $\tilde{\kappa}\in\cE$ with $P\hat{\kappa}=P\tilde{\kappa}$
for $P\in\cP$. 

\smallskip\noindent{\rm\textbf{(c)}}
$\cO$ is  (Lehmann-Scheff\'e-)complete. 
If $\cC\subseteq \cA$ is sufficient, then  $\cO\subseteq\cC$ $[\cP]$.

\smallskip\noindent{\rm\textbf{(d)}}
The following statements are equivalent:
\begin{Aequiv}
\item Every           \label{Equiv:estimable_optimal}
unbiasedly estimable parameter has an optimal unbiased estimator.
\item There exists a complete sufficient sub-$\sigma$-algebra. 
                                    \label{Equiv:Exists_compl_suff}
\item $\cO$ is sufficient.  \label{Equiv:cO_sufficient}
\end{Aequiv}
If these statements are true, then every complete sufficient 
sub-$\sigma$-algebra $\cC$ satisfies $\cC=\cO$ $[\cP]$ and $\cC\subseteq\cO$.
\end{Thm}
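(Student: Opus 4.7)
The plan is to prove the four parts in order, treating (a)--(c) as the structural foundation and then deriving (d) as a consequence; the deepest step will be the forward direction of (b), which identifies optimality with $\cO$-measurability via a perturbation argument.

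For \textbf{(a)}, I would verify $\cO$ is a $\sigma$-algebra by checking $\cX\in\cO$ (trivial, since $h\in\cE_0$ means $Ph=0$), complement stability ($P\1_{A^c}h=Ph-P\1_A h$), and countable additivity on disjoint unions via dominated convergence (note $|h|\in\cL^1(P)$ since $h\in\cE_0\subseteq\cE$). That $\cO$ contains all $\cP$-null sets is immediate. For \textbf{(b)}, the easier implication ``$\cO$-measurable $\Rightarrow$ optimal'' goes through Rao--Blackwell--Jensen: if $\hat\kappa$ is $\cO$-measurable and $\tilde\kappa\in\cE_\kappa$ is another unbiased estimator, then $\tilde\kappa-\hat\kappa\in\cE_0$, so by definition of $\cO$ we have $P\1_A\tilde\kappa=P\1_A\hat\kappa$ for every $A\in\cO$, which identifies $\hat\kappa$ as a version of $P(\tilde\kappa\,\pmb|\,\cO)$ simultaneously for every $P\in\cP$; Jensen's inequality applied to the convex $\phi$ then gives the optimality bound. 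For the converse, I would take a bounded $h\in\cE_0$, consider the family $\hat\kappa+th\in\cE_\kappa$, apply the optimality inequality with $\phi(x)=x^2$ and expand to get $2tP((\hat\kappa-\kappa(P))h)+t^2Ph^2\ge 0$ for all $t\in\R$, forcing $P\hat\kappa h=0$ for every $P\in\cP$ and every bounded $h\in\cE_0$; a monotone class argument then promotes this to $\cO$-measurability of $\hat\kappa$.

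For \textbf{(c)}, completeness of $\cO$ is immediate: if $h$ is $\cO$-measurable with $h\in\cE_0$, then $A_\pm:=\{\pm h>0\}\in\cO$ and $P h^\pm=\pm P\1_{A_\pm}h=0$, so $h=0$ $\cP$-a.s. For $\cO\subseteq\cC$ $[\cP]$ under sufficiency of $\cC$: given $A\in\cO$, take a $P$-independent version $g$ of $P(\1_A\,\pmb|\,\cC)$. Since $\1_A-g\in\cE_0$ and $A\in\cO$ we have $P\1_A=P\1_A g=Pg^2$ (the last by tower); combined with $Pg=P\1_A$ this yields $Pg(1-g)=0$, and since $0\le g\le 1$ we conclude $g\in\{0,1\}$ $\cP$-a.s., i.e.\ $g=\1_B$ for some $B\in\cC$, and a routine argument gives $A=B$ $[\cP]$.

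For \textbf{(d)}, all three implications collapse once (a)--(c) are in hand. For \ref{Equiv:Exists_compl_suff}$\Rightarrow$\ref{Equiv:estimable_optimal} one uses the classical Rao--Blackwell--Lehmann--Scheff\'e argument: $\cC$-condition any unbiased estimator and invoke completeness for uniqueness and Jensen for optimality. For \ref{Equiv:estimable_optimal}$\Rightarrow$\ref{Equiv:cO_sufficient}, note that every bounded $g\in\cE$ (in particular every indicator $\1_A$) estimates its own expectation; by \ref{Equiv:estimable_optimal} there is an optimal unbiased estimator, which by (b) is a $P$-independent version of $P(g\,\pmb|\,\cO)$, i.e.\ $\cO$ is sufficient. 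For \ref{Equiv:cO_sufficient}$\Rightarrow$\ref{Equiv:Exists_compl_suff}, combine (c) with the assumed sufficiency of $\cO$. The final addendum $\cC=\cO$ $[\cP]$ for any complete sufficient $\cC$ follows from $\cO\subseteq\cC$ $[\cP]$ by (c) together with $\cC\subseteq\cO$, which one sees directly: for $A\in\cC$ and $h\in\cE_0$, the $P$-independent version of $P(h\,\pmb|\,\cC)$ has vanishing expectation under every $P\in\cP$ and hence vanishes $\cP$-a.s.\ by completeness of $\cC$, giving $P\1_A h=0$. The main obstacle is the variational step in (b): one has to control integrability to justify perturbations $\hat\kappa+th$, which is the reason for restricting the perturbation to bounded $h\in\cE_0$ and then passing to general $\cO$-measurable functions by a monotone class argument.
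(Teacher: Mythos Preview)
Your arguments for (c), (d), and the final addendum are correct, and your direct computation for $\cO\subseteq\cC$ $[\cP]$ in (c) is a pleasant alternative to the paper's Rao--Blackwell route. There are, however, two genuine gaps earlier on.

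\textbf{Part (a).} Verifying $\cX\in\cO$, complement stability, and closure under countable \emph{disjoint} unions only shows that $\cO$ is a Dynkin system, not a $\sigma$-algebra. You still owe $\cap$-stability. The paper fills this with one line: if $A\in\cO$ and $h\in\cE_0$, then $\1_A h\in\cE_0$ (it is dominated by $|h|$, and $P\1_A h=0$ by the very definition of $\cO$); now applying the defining condition of $\cO$ to any $B\in\cO$ and to the new unbiased estimator $\1_A h$ gives $P\1_{A\cap B}h=0$, whence $A\cap B\in\cO$.

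\textbf{Part (b), the converse ``optimal $\Rightarrow$ $\cO$-measurable''.} This is where your plan breaks. First, the perturbation argument with $\phi(x)=x^2$ requires $P(\hat\kappa-\kappa(P))^2<\infty$, but the theorem lives in $\cE=\cL^1(\cP)$; if $\hat\kappa\notin\cL^2(P)$ for some $P$, both sides of your inequality are $+\infty$ and nothing can be extracted. The paper handles exactly this by invoking Schmetterer and Strasser~(1974, Satz~2) with the loss $W(t):=|t|-\log(1+|t|)$, which is strictly convex and satisfies $0\le W(t)\le|t|$, so $PW(\hat\kappa-\kappa(P))$ is always finite. Second, even granting $P\hat\kappa h=0$ for all bounded $h\in\cE_0$ and $P\in\cP$, the passage to ``$\hat\kappa$ is $\cO$-measurable'' is \emph{not} a monotone class argument. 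What you need is $P\1_{\{\hat\kappa\le c\}}h=0$ for every $c\in\R$ and every $h\in\cE_0$, and neither truncating $h$ (truncations of $h\in\cE_0$ need not lie in $\cE_0$, since the mean-zero condition is destroyed) nor replacing $\hat\kappa$ by $\1_{\{\hat\kappa\le c\}}$ follows from the orthogonality you have. This step is the substantive content of the Schmetterer--Strasser result; the paper does not reprove it either but cites the reference and names the correct loss function.
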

\begin{proof}
(a) $\cO$ is a Dynkin system and, since $A\in\cO$ implies $\1_Ah\in\cE_0$
for $h\in\cE_0$, also $\cap$-stable. The null set claim is trivial. 

(b) The first ``only if'' follows from Schmetterer and Strasser~(1974, Satz~2,
the special case of $p=1$) applied to, say, $W(t):=|t|-\log(1+|t|)$ for 
$t\in\R$. The second ``only if'' is clear since the definition of $\cO$
applied to $h:=\tilde{\kappa}-\hat{\kappa}$ yields 
$0\in\bigcap_{P\in\cP}P( \tilde{\kappa}-\hat{\kappa}  \,\pmb|\,\cO)$.
Finally, the first property, namely optimality of $\hat{\kappa}$, 
follows from the last by the conditional Jensen inequality argument 
familiar from the proof of the Rao-Blackwell theorem.

(c) $\cO$ is complete by the uniqueness theorem for integrals.
If $\cC\subseteq\cA$ is sufficient  and $A\in\cO$,
then $\1_A$ is optimal and its Rao-Blackwellization with respect 
to $\cC$ is better, hence also optimal and hence equal to $\1_A$ 
almost surely, yielding $A\in\cC$ $[\cP]$.

(d) Schmetterer and Strasser~(1974, S\"atze 4 and 5, the special case of 
$p=1$).
\end{proof}

Thus $\cO$ above is the $\sigma$-algebra generated by all optimal unbiased  
estimators
in the model $\cP$, by  Theorem~\ref{Thm_RPLSSS}(b)  and by considering 
the estimators $\1_A$ with $A\in\cO$, so let us here briefly call $\cO$ the 
\defn{optimal} $\sigma$-algebra of $\cP$.

Key sources of Theorem~\ref{Thm_RPLSSS} include the ones leading to 
the Rao-Blackwell-Lehmann-Scheff\'e theorem in 1950, for which partial credit is 
also due to the noneponymous Halmos, Hodges, and Barankin, in view of  the 
references given by Pfanzagl~(1994, pp.~105, 106, 107). Afterwards,
a fundamental idea of Rao~(1952, pp.~30--31), first made 
rigorous by  Bahadur~(1957) and later more generally by Torgersen~(1988) in the 
mathematically inconvenient and practically less important 
setting of the UMVU theory, finally led to the present result in the hands
of Schmetterer and Strasser~(1974), after earlier work of themselves
and of Padmanabhan, Linnik, and Rukhin cited by them. 
Further developments include Bahadur~(1976), Kozek~(1988),
Kagan and Konikov~(2006),  and Kagan and Malinovsky~(2013). 

Turning now to exhaustions of models, there is a trivial but useful remark of
Bondesson~(1983), stated  here in the version of Pfanzagl~(1994, p.~108, 
Remark~3.2.8).
 
\begin{Lem} 
\label{Lem:Bo_1983} Let  
$(\cP_{\eta}:\eta \in \Eta)$ be an exhaustion of the model 
$\cP\subseteq\Prob(\cX,\cA)$. If 
$\hat{\kappa}$ is an optimal unbiased estimator in each of the submodels 
$\cP_\eta$, then so it is in $\cP$.
\end{Lem}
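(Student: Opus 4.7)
The plan is a direct unwrapping of Definition~\ref{Def:Opt_unb}. Set $\kappa(P):=P\hat\kappa$ for $P\in\cP$, so that $\hat\kappa$ is by construction unbiased for $\kappa$ on $\cP$, and fix an arbitrary competitor $g\in\cE_\kappa$ together with an arbitrary convex $\phi:\R\rightarrow\R$. The optimality condition to be verified reads
\[
 P\phi\circ(\hat\kappa-\kappa(P)) &\le& P\phi\circ(g-\kappa(P)) \qquad \text{for every } P\in\cP,
\]
and since this is a pointwise-in-$P$ statement, the strategy is to fix such a $P$, use the exhaustion hypothesis to select some $\eta\in\Eta$ with $P\in\cP_\eta$, and then invoke the assumed optimality of $\hat\kappa$ inside the submodel $\cP_\eta$ at the point $P$.

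The one routine line worth verifying is that $g$ restricts to a legitimate competitor inside $\cP_\eta$, i.e.\ to an element of the analogue of $\cE_\kappa$ formed relative to $\cP_\eta$. Integrability is immediate since $\cL^1(\cP)\subseteq \cL^1(\cP_\eta)$, and the unbiasedness identity $Pg=\kappa(P)$ valid for every $P\in\cP$ restricts trivially to $\cP_\eta\subseteq\cP$. Granted this, the submodel optimality of $\hat\kappa$ yields the displayed inequality at the chosen $P$; since $P$ was arbitrary and $\bigcup_{\eta}\cP_\eta=\cP$, this completes the verification.

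I do not foresee any genuine obstacle: the lemma merely encodes the tautology that enlarging the model can only shrink the class of competing unbiased estimators, so a pointwise-in-$P$ property like optimality transfers upward along any exhaustion essentially for free. The only thing to keep in mind is that the same convex $\phi$ and the same $g$ are used uniformly in $\eta$, which is fine because $\eta$ is chosen after fixing $P$, $g$ and $\phi$.
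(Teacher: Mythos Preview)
Your proof is correct and is exactly the unwrapping of the definitions that the paper alludes to; the paper's own proof consists of the single sentence ``Absolutely trivial by Definition~\ref{Def:Opt_unb} and the definition of `exhaustion'.'' Your write-up makes explicit the one point worth noting, namely that any competitor $g\in\cE_\kappa$ in $\cP$ restricts to a competitor in each $\cP_\eta$, which is precisely why enlarging the model can only help.
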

\begin{proof} Absolutely trivial by Definition~\ref{Def:Opt_unb} and the 
definition of ``exhaustion''. 
\end{proof}

Combining Theorem~\ref{Thm_RPLSSS} with Lemma~\ref{Lem:Bo_1983} yields the 
following result.

\begin{Thm} 
\label{Thm:Partial_opt_implies_opt}
Let $(\cP_{\eta}:\eta \in \Eta)$ be an exhaustion of the model $\,\cP$,
and let $\cO$ and $\cO_\eta$ respectively denote the optimal $\sigma$-algebras
of $\,\cP$ and $\,\cP_\eta$ for $\eta\in\Eta$. Then 
$\,\bigcap_{\eta\in\Eta} \cO_{\eta} \subseteq\cO$. 
\end{Thm}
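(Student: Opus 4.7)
The plan is to reduce the claim to indicator functions and then chain together Theorem~\ref{Thm_RPLSSS}(b) with Lemma~\ref{Lem:Bo_1983}. First I would fix an arbitrary $A\in \bigcap_{\eta\in\Eta}\cO_\eta$ and show that the indicator $\1_A$ is optimal unbiased in the full model $\cP$, from which $A\in\cO$ is immediate by going back through Theorem~\ref{Thm_RPLSSS}(b). The key observation making this clean is that $\1_A$ is bounded and hence automatically belongs to $\cL^1(P)$ for every $P$, so membership in the relevant $\cE$-spaces (for each $\cP_\eta$ and for $\cP$) is never an issue.

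More precisely: for each $\eta\in\Eta$, the set $A$ lies in $\cO_\eta$, so $\1_A$ is $\cO_\eta$-measurable; applying the ``if'' direction of Theorem~\ref{Thm_RPLSSS}(b) to the submodel $\cP_\eta$ then gives that $\1_A$ is optimal unbiased in $\cP_\eta$. Since this holds for every $\eta$ and $(\cP_\eta:\eta\in\Eta)$ exhausts $\cP$, Lemma~\ref{Lem:Bo_1983} upgrades this to the statement that $\1_A$ is optimal unbiased in $\cP$ itself. Now the ``only if'' direction of Theorem~\ref{Thm_RPLSSS}(b), applied to $\cP$, yields that $\1_A$ is $\cO$-measurable, so $A=\{\1_A=1\}\in\cO$, as required.

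There is essentially no obstacle here: the whole argument is a one-line chase through the cited results, and the only thing to verify is the integrability hypothesis, which is trivial because indicators are bounded. If any subtlety arises, it would be the need to remember that in Theorem~\ref{Thm_RPLSSS}(b) optimality is defined via the own expectation of $\hat{\kappa}$ (see Definition~\ref{Def:Opt_unb}), so Lemma~\ref{Lem:Bo_1983} is applied with the estimand $P\mapsto P\1_A$, which for varying $P\in\cP$ restricts correctly on each $\cP_\eta$. No computation is required, and this short reduction is presumably exactly why the authors placed Theorem~\ref{Thm:Partial_opt_implies_opt} immediately after Theorem~\ref{Thm_RPLSSS} and Lemma~\ref{Lem:Bo_1983}.
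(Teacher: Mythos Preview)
Your proposal is correct and is essentially identical to the paper's proof: fix $A\in\bigcap_{\eta}\cO_\eta$, use Theorem~\ref{Thm_RPLSSS}(b) in each submodel to get $\1_A$ optimal unbiased there, upgrade via Lemma~\ref{Lem:Bo_1983}, and apply Theorem~\ref{Thm_RPLSSS}(b) once more in $\cP$. Your additional remarks on boundedness and on the estimand $P\mapsto P\1_A$ are accurate clarifications but not needed beyond what the paper states.
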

\begin{proof} If $A\in\bigcap_{\eta\in\Eta} \cO_{\eta}$, then  Theorem~\ref{Thm_RPLSSS}(b)
yields that $\1_A$ is an optimal unbiased estimator in each of the models $\cP_{\eta}$,  
and hence in $\cP$ by Lemma~\ref{Lem:Bo_1983}, and hence is $\cO$-measurable by 
Theorem~\ref{Thm_RPLSSS}(b) again. 
Thus $\bigcap_{\eta\in\Eta} \cO_{\eta} \subseteq\cO$.
\end{proof}

Finally,  Theorems~\ref{Thm_RPLSSS} and~\ref{Thm:Partial_opt_implies_opt}
yield:

\begin{proof}[Second proof of Theorem~\ref{Thm:main}]
Let $\cO$ and $\cO_{i,\eta}$ 
denote the optimal $\sigma$-algebras
of $\cP$ and $\cP_{i,\eta}$ for $i\in I$ and $\eta\in\Eta_i$. Then
$\cC_i \subseteq  \cO_{i,\eta}$ whenever $\eta\in\Eta_i$,
by  Theorem~\ref{Thm_RPLSSS}(d) applied to $\cP_{i,\eta}$ and its complete 
sufficient sub-$\sigma$-algebra $\cC_i$.
Hence $\cC_i\subseteq\bigcap_{\eta\in\Eta_i} \cO_{i,\eta}\subseteq\cO$ for every $i\in I$,
by  Theorem~\ref{Thm:Partial_opt_implies_opt}, and hence 
$\bigvee_{i\in I}\cC_i\subseteq\cO$. 
As $\cO$ is complete for $\cP$ by  Theorem~\ref{Thm_RPLSSS}(c), so is its 
sub-$\sigma$-algebra $\bigvee_{i\in I}\cC_i$.
\end{proof}

\section{Examples, including a proof of Theorem~\ref{Thm:CKS}}
                                                       \label{Sec:Examples}
\begin{Example}[Product models, Landers and Rogge, 1976] 
                   \label{Eample:Landers_Rogge} \label{Example:Landers_Rogge}
Let $I$ be a set. For each $i\in I$, let 
$\cP_i\subseteq \Prob(\cX_i,\cA_i)$ be a model.
Then $\cP := \{\bigotimes_{i\in I}P_i : P_i\in\cP_i\text{ for }i\in I\}$
is complete iff each $\cP_i$ is complete. 
\end{Example}
\begin{proof} Let $(\cX,\cA)$ be the product of the 
$(\cX_i,\cA_i)$. For $i\in I$, let $\cC_i$ be the sub-$\sigma$-algebra
of $\cA$ generated by the $i$th coordinate projection 
$\pi_i:\cX\rightarrow\cX_i$, $\Eta_i:=\bigtimes_{j\in I\setminus\{i\}}\cP_j$, and
\[
 \cP_{i,\eta} &:=&\left\{\bigotimes_{k\in I}P_k : 
  P_i\in\cP_i,\, P_j=Q_j \text{ for }j\in I\setminus\{i\} \right\}
 \quad\text{ for }\eta=(Q_j:j\in I\setminus\{i\})\in\Eta_i.
\]

Assume that each $\cP_i$ is complete. Then, for $i\in I$,  
$(\cP_{i,\eta}:\eta\in\Eta_i)$ exhausts $\cP$ and,
for $\eta \in\Eta_i$, $\cC_i$ is complete for $\cP_{i,\eta}$, 
since \eqref{Eq:Hypothesis_of_completeness}
for $h$ $\cC_i$-measurable, and thus  $h=g\circ\pi_i$ for some 
$\cA_i$-measurable $g$, here yields 
$0=(\bigotimes_{k\in I}P_k)h=P_ig$ for $P_i\in\cP_i$,
hence $g=0$ $\cP_i$-a.s., hence $h=0$ $\cP$-a.s.,
and $\cC_i$ is sufficient for $\cP_{i,\eta}$ by 
Basu~(1982, Theorem~3), 
since $\cD_i:=\bigvee_{j\in I\setminus \{i\}}\cC_j$ is ancillary (under $\cP_{i,\eta}$)
and $\cC_i,\cD_i$ are independent with $\cC_i\vee \cD_i$ sufficient.
Hence $\bigvee_{i\in I}\cC_i=\cA$ is complete for $\cP$ by 
Theorem~\ref{Thm:main}. 

Assume that $\cP$ is complete and $i\in I$. In the uninteresting 
case where $\cP=\emptyset$, we then have $\cA=\{\emptyset,\cX\}$,
and hence $\cA_i= \{\emptyset,\cX_i\}$ and thus $\cP_i$ complete for each 
$i\in I$. If now $\cP\neq\emptyset$, $i\in I$, and
$P_ih=0$ for $P_i\in\cP_i$, then $P\,h\circ\pi_i=0$ for $P\in\cP$, 
hence $h\circ\pi_i=0$ $\cP$-a.s., and hence, using $\cP\neq\emptyset$,
$h=0$ $\cP_i$-a.s. 
\end{proof}

We recall that Example~\ref{Example:Landers_Rogge} is the basic tool for
proving complete sufficiency of ``the vector of order statistics''
in certain nonparametric models, see for example 
Mandelbaum and R\"uschendorf~(1987), Pfanzagl~(1994, p.~21), 
and Mattner~(1996, p.~1267), where also 
Od\'en and Wedel~(1975) 
should have been cited as explained in Mattner~(1999, p.~405). 

Let us also mention that it took some twenty years from the desire to have 
Example~\ref{Example:Landers_Rogge} at least for finite $I$, shining 
through analogous results involving a more restrictive  assumption of 
``strong completeness'' in  Lehmann and Scheff\'e~(1955, section 7) 
or Fraser~(1957, p.~26), to the the proof of Landers and Rogge~(1976),
and that even the analogue involving bounded completeness
was provided only about one year earlier
by Plachky~(1975) with a somewhat complicated proof.

The special case of $I=\{1,2\}$ of the ``if''-statement of   
Example~\ref{Example:Landers_Rogge} is contained in 
Theorem~\ref{Thm:CKS}, as essentially 
already remarked by Cramer et al.~(2002):
If, in the notation of Theorem~\ref{Thm:CKS}, the laws do not actually 
depend on $\theta_1$, then conditions~\ref{Aequiv:CKS_homogeneous}  
and~\ref{Aequiv:CKS_integrable} are trivially fulfilled, 
and assumption~\ref{Aequiv:CKS_R_part_complete} is just the completeness of 
$\cR$. Conversely, but less obviously, one can  go the other way round:

\begin{Subexample}                \label{Subexample:CKS} 
Theorem~\ref{Thm:CKS} can be deduced from
Example~\ref{Example:Landers_Rogge}. 
\end{Subexample}

\begin{proof}
Under the assumptions of 
Theorem~\ref{Thm:CKS}, let 
$h:\cX_1\times\cX_2\rightarrow \R$ 
satisfy~\eqref{Eq:Hypothesis_of_completeness}. Then 
\[
   \int\left(\int h(x_1,x_2)\dd Q^{}_{\theta_1}(x_1)\right)
   \dd R_{\theta_1,\theta_2}(x_2)  &=&0 \quad\text{ for } \theta_1\in\Theta_1
  \text{ and }
\theta_2\in\Theta_2
\]
by Fubini. 
For each $\theta_1\in\Theta_1$, the completeness 
assumption~\ref{Aequiv:CKS_R_part_complete} yields 
\la     \label{Eq:Proof_CKS_1}
   \int h(x_1,x_2)\dd Q^{}_{\theta_1}(x_1)&=&0
\al
first for $\{R_{\theta_1,\theta_2} : \theta_2 \in\Theta_2\}$-a.e.~$x_2$,
and then, using the homogeneity assumption~\ref{Aequiv:CKS_homogeneous}, 
even for  
$\{R_{\theta_1',\theta_2} : \theta_1'\in\Theta_1, \theta_2 \in\Theta_2\}$-a.e.~$x_2$.
Hence   
\[        
 \int \left(\int h(x_1,x_2)\dd Q^{}_{\theta_1}(x_1)\right)
   \dd R_{\theta_1',\theta_2}(x_2) 
  &=& 0 \quad\text{ for } \theta_1,\theta_1'\in\Theta_1, \theta_2\in\Theta_2.
\]
Now, thanks to assumption~\ref{Aequiv:CKS_integrable}, the 
$\cP$-integrable function $h$ is also integrable with respect to each 
$Q^{}_{\theta_1}\otimes R_{\theta_1',\theta_2}$ and hence Fubini yields
\la          \label{Eq:Proof_CKS_2}
 \int h \dd Q^{}_{\theta_1} \otimes R_{\theta_1',\theta_2} 
  &=& 0 \quad\text{ for } \theta_1,\theta_1'\in\Theta_1, \theta_2\in\Theta_2
\al
and hence, applying Example~\ref{Example:Landers_Rogge} for $I=\{1,2\}$
to the models $\cQ$ and $\cR$, which are complete 
by~\ref{Aequiv:CKS_Q_complete} and~\ref{Aequiv:CKS_R_part_complete},
we get $h=0$ $Q^{}_{\theta_1} \otimes R_{\theta_1',\theta_2}$-a.s.~for
$\theta_1,\theta_1'\in\Theta_1$ and  $\theta_2\in\Theta_2$, and hence 
in particular~\eqref{Eq:Conclusion_of_completeness}.
\end{proof}

The proof of Cramer et al.~(2002, pp.~273--274), valid under the present 
additional assumption~\ref{Aequiv:CKS_integrable},  uses the original proof of 
Example~\ref{Example:Landers_Rogge}, rather than the result, and is hence 
a bit longer. On the other hand, starting from \eqref{Eq:Proof_CKS_1},
instead of having to conclude~\eqref{Eq:Proof_CKS_2}, they would 
only need to justify the second equality in 
\[
 0&=& \int_B\left( \int h(x_1,x_2)\dd Q^{}_{\theta_1}(x_1) \right)  
\dd   R_{\theta_1',\theta_2}(x_2) \\
 &=& \int\left( \int_B h(x_1,x_2)\dd   R_{\theta_1',\theta_2}(x_2)\right)  
\dd Q^{}_{\theta_1}(x_1) \quad\text{ for }B\in\cA_2,
\] 
which, in view of  an example of Fichtenholz~(1924), 
might also  hold for some functions $h$ not being 
$Q^{}_{\theta_1}\otimes R_{\theta_1',\theta_2}$-integrable.

\begin{Example} Completeness for multiparameter exponential models,
with natural parameter spaces with nonempty interiors, follows
from the one-parameter case.
\end{Example}
\begin{proof} Let $\cP=\{P_\alpha:\alpha\in\Alpha\}$ be a $k$-parameter
exponential model with natural parameter space $\Alpha\subseteq\R^k$,
that is, each $P_\alpha$ has a $\mu$-density
$x\mapsto c(\alpha)h(x)\exp(\sum_{i=1}^k \alpha_iT_i(x))$, 
and let $\Alpha_0\subseteq\Alpha$ be nonempty and open.
Then Theorem~\ref{Thm:main}, applied to $\cP_0:=\{P_\alpha:\alpha\in\Alpha_0\}$,
$I:=\{1,\ldots,k\}$, $\cC_i:=\sigma(T_i)$, and
the $\cP_{i,\eta}$ being the one-parameter exponential models
obtained from $\cP_0$ by varying $\alpha_i$ while keeping  all other parameter 
coordinates fixed,
yields the completeness of $\bigvee_{i\in I}\cC_i$ for $\cP_0$ and hence, 
by homogeneity of $\cP$, also  for $\cP$.
\end{proof}

The above reduction from the multiparameter to the one-parameter case 
appears to be shorter than the one in Pfanzagl~(1994, pp.~26--27).

The next lemma and its corollary, provided here in preparation for 
Example~\ref{Example:Compl_suff_unkknow_trunc}
and Subexample~\ref{Subexample:Trunc_exp_fam}, contain in particular 
the determination of complete sufficient statistics in 
discrete as well as continuous and even mixed 
``taxi problem models'' with unknown 
lower and upper bounds, compare Feller~(1970, Example~(e) on p.~226
and the exercises~8,9 on pp.~237--238) for the discrete case, 
without having to calculate the joint laws of 
sample minima  and maxima as apparently intended  in 
Lehmann and Casella~(1998, Problem~6.30 on p.~72).

If $(\cX,\cA,\mu)$ is a measure space and $E\in\cA$ with $0<\mu(E)<\infty$,
then we consider the conditional  law 
$\mu(\,\cdot\, \pmb{|}E ):= \mu(\cdot\cap E)/\mu(E)$.
 
\begin{Lem}   \label{Lem:uniform_cap-stable_compl_suff}
Let $(\cX,\cA,\mu)$ be a measure space,   $\cE\subseteq\cA$, 
$n\in\N$, 
$\cP:=\{\mu(\,\cdot\, \pmb{|}E )^{\otimes n} : E \in\cE, 0<\mu(E)<\infty\}$,
and $\cC:=\sigma(E^n:E\in\cE)$.

\smallskip\noindent{\rm\textbf{(a)}} 
Let $\mu$ be $\sigma$-finite. Then $\cC$ is sufficient for $\cP$. 

\smallskip\noindent{\rm\textbf{(b)}} 
Let $\cE$ be  $\cap$-stable. Then $\cC$ is complete for $\cP$.
\end{Lem}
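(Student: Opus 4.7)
The plan is straightforward in both parts, resting on the fact that the densities of the members of $\cP$ with respect to $\mu^{\otimes n}$ are themselves $\cC$-measurable.

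For part (a) I would simply note that $\dd P_E/\dd \mu^{\otimes n}=\mu(E)^{-n}\1_{E^n}$ is $\cC$-measurable for each admissible $E$, since $E^n\in\cC$; as $\mu$, and hence $\mu^{\otimes n}$, is $\sigma$-finite, the dominated-family factorization criterion yields sufficiency of $\cC$ for $\cP$ with no further work.

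For part (b), my plan is to fix an arbitrary admissible $E_0\in\cE$ and reduce the task to showing $h\,\1_{E_0^n}=0$ $\mu^{\otimes n}$-a.e., which is the same as $h=0$ $P_{E_0}$-a.s.; by arbitrariness of $E_0$ this then yields $h=0$ $\cP$-a.s. The key tool will be the $\cap$-stable generator uniqueness theorem for finite signed measures. First, $\cap$-stability of $\cE$ gives that $\cE_0:=\{(E\cap E_0)^n:E\in\cE\}$ is $\cap$-stable, contains the top set $E_0^n=(E_0\cap E_0)^n$, and generates the trace $\sigma$-algebra $\cC\cap E_0^n=\sigma(\{E^n\cap E_0^n:E\in\cE\})$. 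Next, for each $E\in\cE$, either $\mu(E\cap E_0)=0$, in which case $\int_{(E\cap E_0)^n}h\,\dd\mu^{\otimes n}=0$ trivially, or $0<\mu(E\cap E_0)<\infty$, in which case the hypothesis $P_{E\cap E_0}h=0$ gives the same conclusion.

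Finally I will exploit the integrability $h\in\cL^1(P_{E_0})$, which renders the two positive measures $h^{\pm}\mu^{\otimes n}$ finite once restricted to $E_0^n$. Since they coincide on the $\cap$-stable generator $\cE_0$ and agree on the top set, they coincide on all of $\cC\cap E_0^n$; applied to the $(\cC\cap E_0^n)$-measurable sets $\{h>0\}\cap E_0^n$ and $\{h<0\}\cap E_0^n$ this forces $h\,\1_{E_0^n}=0$ $\mu^{\otimes n}$-a.e. The only mildly delicate point will be the need to localize to $E_0^n$ before invoking uniqueness of finite signed measures, since $h$ is only assumed integrable under each $P\in\cP$ rather than globally with respect to $\mu^{\otimes n}$; but this localization is precisely what $\cap$-stability of $\cE$ makes available.
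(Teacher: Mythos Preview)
Your proposal is correct and follows essentially the same route as the paper. Part (a) is identical; for (b) the paper integrates against $P_{E_0}$ rather than $\mu^{\otimes n}$ restricted to $E_0^n$, but these differ only by the factor $\mu(E_0)^n$, and the argument via the $\cap$-stable generator $\{(E\cap E_0)^n:E\in\cE\}$ of the trace of $\cC$ on $E_0^n$ is exactly the same---the paper just compresses your final uniqueness-of-measures step into one sentence. One cosmetic point: since (b) does not assume $\mu$ $\sigma$-finite, writing $\mu^{\otimes n}$ globally is slightly informal; working with $P_{E_0}$ (equivalently $(\mu|_{E_0})^{\otimes n}$) throughout, as the paper does, sidesteps this entirely.
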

\begin{proof} 
Let $\cE_0 := \{E\in\cE :0<\mu(E)<\infty\}$ and
$P_E := \mu(\,\cdot\, \pmb{|}E )^{\otimes n} $ for $E\in\cE_0$.

(a) For $E\in\cE_0$, the function $\frac1{\mu(E)^n}\1_{E^n}$
is a  $\cC$-measurable density of $P_E$, with respect to the 
$\sigma$-finite measure $\mu^{\otimes n}$.
Hence $\cC$ is sufficient for $\cP$ by the factorization criterion.  

(b) Let $h:\cX\rightarrow \R$ be $\cC$-measurable 
with~\eqref{Eq:Hypothesis_of_completeness}. Let $E_0\in\cE_0$.
For $E\in\cE$, we then have
\[
 \int_{(E\cap E_0)^n}h\dd P^{}_{E_0} 
  &=& \frac1{\mu(E_0)^n}\int h 
 \dd \left(\1^{}_{E\cap E_0}\,\mu(\,\cdot\, \pmb{|}E_0 )\right)^{\otimes n}
 \,\,\,=\,\,\, 0,
\]
namely trivially if $\mu(E\cap E_0)=0$, and otherwise 
by~\eqref{Eq:Hypothesis_of_completeness}, since then
$\left(\1_{E\cap E_0}\mu(\,\cdot\, \pmb{|}E )\right)^{\otimes n}
= \mu(E\cap E_0)^nP_{E\cap E_0}$ and $P_{E\cap E_0} \in \cP$.
Now on $E_0^n$, 
$\{(E\cap E_0)^n : E\in \cE\}$ is a  $\cap$-stable generator 
of the trace of  $\cC$, and contains $E_0^n$, 
and hence we get $h=0$ $P_{E_0}$-a.s. 
\end{proof}

By  Counterexample~\ref{Counterex:sigma-finiteness_needed},
the above  $\sigma$-finiteness assumption can not be omitted.

In 
\ref{Cor:Taxi_examples}, \ref{Example:Intervals_vs_rays}  
and \ref{Subexample:Trunc_exp_fam} below,
$\cX$ is assumed to be a  subset of the extended real line $\overline{\R}$. 
We then call a set $J\subseteq \cX$ an \defn{upray in $\cX$}, if
$x\in J, y\in \cX, x\le y$ jointly imply $y\in J$. We analogously 
define \defn{downray in $\cX$}. And we call $I$ an \defn{interval in $\cX$},
if $x,z\in I, y\in \cX, x\le y\le z$ jointly imply $y\in I$.
Clearly, in $\cX$, every ray is an interval, and $I$ is an interval iff 
$I = J\cap K$ for an upray $J$ and a downray $K$.

If $(X_i:i\in I)$ is a family of functions $X_i$, all  with the same domain of 
definition $\Omega$, then we write $\lpp X_i : i\in I\rpp$ for the function
$\Omega \ni \omega \mapsto (X_i(\omega):i\in I)$. 
  
\begin{Cor}[Completeness of $\min$ and $\max$ in truncation models]
                                          \label{Cor:Taxi_examples} 
Let $\mu$ be a  measure on a measurable subspace $(\cX,\cA)$ of 
$\overline{\R}$, $n\in\N$, and  $\lpp X_1,\ldots,X_n \rpp := \id_{\cX^n}$.

\smallskip\noindent{\rm\textbf{(a)}} 
$\min_{i=1}^nX_i$ is complete sufficient for 
$\{\mu(\,\cdot\,\pmb{|}J)^{\otimes n}:J\text{ upray in }\cX, 0<\mu(J)<\infty\}$.

\smallskip\noindent{\rm\textbf{(b)}} 
$\max_{i=1}^nX_i$ is complete sufficient for 
$\{\mu(\,\cdot\,\pmb{|} K )^{\otimes n}:
K\text{ downray in }\cX, 0<\mu(K)<\infty\}$.

\smallskip\noindent{\rm\textbf{(c)}} $\lpp  \min_{i=1}^nX_i,\max_{i=1}^nX_i\rpp$ 
is complete for 
\[
 \{\mu(\,\cdot\,\pmb{|}I)^{\otimes n}:I\text{ interval in }\cX,0<\mu(I)<\infty\}
\]
and, if $\mu$ is assumed to be $\sigma$-finite, also sufficient.
\end{Cor}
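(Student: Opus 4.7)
The plan is to reduce each of the three parts to a single application of Lemma~\ref{Lem:uniform_cap-stable_compl_suff}, after choosing an appropriate $\cap$-stable family $\cE\subseteq\cA$ and identifying the generated $\sigma$-algebra $\cC=\sigma(E^n:E\in\cE)$ with the $\sigma$-algebra of the announced statistic.

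For (a), I would take $\cE$ to be the collection of all uprays in $\cX$. The intersection of two uprays is clearly an upray, so $\cE$ is $\cap$-stable. Since an upray $J$ is upward-closed, a tuple $(x_1,\ldots,x_n)\in\cX^n$ lies in $J^n$ if and only if $\min_i x_i\in J$; this gives $\sigma(J^n:J\in\cE)\subseteq \sigma(\min_{i=1}^n X_i)$. The reverse inclusion follows because the sets $J_t:=\{x\in\cX:x\ge t\}$ are uprays with $\{\min_i X_i\ge t\}=J_t^n$, and the $\{J_t^n:t\in\overline\R\}$ generate $\sigma(\min_{i=1}^nX_i)$. Thus $\cC=\sigma(\min_{i=1}^nX_i)$, whence Lemma~\ref{Lem:uniform_cap-stable_compl_suff}(b) yields completeness and part~(a) of the same lemma yields sufficiency (noting that any $P_J$ from the family is supported on the finite-$\mu$-measure set $J^n$, so the $\sigma$-finiteness hypothesis in Lemma~\ref{Lem:uniform_cap-stable_compl_suff}(a) causes no trouble).

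Part (b) is entirely symmetric: take $\cE$ to be the $\cap$-stable family of downrays in $\cX$, observe that $K^n=\{\max_i X_i\in K\}$ for downrays $K$, and identify $\cC$ with $\sigma(\max_{i=1}^nX_i)$ in the same way. For part (c), take $\cE$ to be the family of intervals in $\cX$, which is $\cap$-stable since the intersection of two intervals satisfies the defining betweenness condition. Writing an arbitrary interval as $I=J\cap K$ with $J$ an upray and $K$ a downray gives $I^n=J^n\cap K^n=\{\min_i X_i\in J,\ \max_iX_i\in K\}$, hence $\sigma(I^n:I\in\cE)\subseteq\sigma(\lpp \min_iX_i,\max_iX_i\rpp)$. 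Conversely, for $s\le t$ the set $[s,t]\cap\cX$ is an interval and $\{s\le \min_i X_i,\ \max_i X_i\le t\}=([s,t]\cap\cX)^n$, and such sets generate $\sigma(\lpp\min_iX_i,\max_iX_i\rpp)$, so the two $\sigma$-algebras coincide. Lemma~\ref{Lem:uniform_cap-stable_compl_suff}(b) then gives completeness, and Lemma~\ref{Lem:uniform_cap-stable_compl_suff}(a) gives sufficiency under the explicit $\sigma$-finiteness assumption stated in (c).

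The only real work is verifying the two $\sigma$-algebra identifications; $\cap$-stability of uprays, downrays, and intervals, together with the invocation of Lemma~\ref{Lem:uniform_cap-stable_compl_suff}, is routine. I do not anticipate any serious obstacle beyond keeping track of the upward/downward closure properties correctly.
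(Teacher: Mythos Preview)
Your plan is correct and matches the paper's proof essentially line for line: same choices of $\cE$ (uprays, downrays, intervals), same identification of $\sigma(E^n:E\in\cE)$ with the $\sigma$-algebra of the announced statistic, and the same appeal to Lemma~\ref{Lem:uniform_cap-stable_compl_suff}. The one place where the paper is slightly more careful is the $\sigma$-finiteness issue in (a) and (b): rather than arguing informally that ``each $P_J$ is supported on a finite-measure set,'' the paper replaces $\mu$ by the genuinely $\sigma$-finite measure $\mu(\cdot\cap\cX_0)$ with $\cX_0:=\bigcup\{E\in\cE:\mu(E)<\infty\}$, which leaves the model $\cP$ unchanged and lets Lemma~\ref{Lem:uniform_cap-stable_compl_suff}(a) apply verbatim---you should make that substitution explicit.
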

\begin{proof} 
We apply Lemma~\ref{Lem:uniform_cap-stable_compl_suff} with, respectively, 

(a) $\cE$ $:=$ set of all uprays in $\cX$ and 
$\sigma(E^n:E\in\cE) =\sigma((\min_{i=1}^nX_i)^{-1}(E): E \in\cE) 
= \sigma(\min_{i=1}^nX_i )$, where the last identity holds since $\cE$ 
generates $\cA$,

(b) $\cE$ $:=$ set of all downrays in $\cX$ and 
$\sigma(E^n:E\in\cE) =\sigma(\max_{i=1}^nX_i )$,

(c) $\cE$ $:=$ set of all intervals in $\cX$ and 
\[ 
 \sigma(E^n:E\in\cE) &=& \sigma( (J\cap K)^n : J\text{ upray}, K \text{ downray})\\
 &=& \sigma (\lpp\min_{i=1}^nX_i ,\max_{i=1}^nX_i \rpp^{-1}(J\times K):
    J\text{ upray }, K \text{ downray})\\ 
 &=& \sigma( \min_{i=1}^nX_i ,\max_{i=1}^nX_i),
\]
and, in parts (a) and (b),  with $\mu$ replaced by 
the $\sigma$-finite measure $\mu(\cdot \cap \cX_0)$
with $\cX_0 := \bigcup \{E\in \cE : \mu(E)<\infty\}$.
\end{proof}

\begin{Example}  \label{Example:Intervals_vs_rays}    
The completeness assertion in Corollary~\ref{Cor:Taxi_examples}(c) 
also follows from~\ref{Cor:Taxi_examples}(a) and~\ref{Cor:Taxi_examples}(b)
via Theorem~\ref{Thm:main}, without using 
Lemma~\ref{Lem:uniform_cap-stable_compl_suff}(b).  
\end{Example}
\begin{proof} Let 
$ \cI :=\{ I\subseteq\cX : I\text{ interval in }\cX,\, 0<\mu(I)<\infty\}$ 
and
\[
 \Theta&:=&\{(J,K): J\text{ upray in }\cX,\,K\text{ downray in }\cX,\, 
 J\cap K\in\cI\}.
\]
For $(J,K)\in\Theta$, let 
$\pi_1(J,K):=J$ and $\pi_2(J,K):=K$. Let 
$\Eta_2,\Eta_1$, in this order, denote the images of the coordinate
projections $\pi_1,\pi_2$.
If $K\in \Eta_1$ is fixed, then 
$\min_{\nu=1}^n X_\nu$ is complete sufficient for 
$\cP_{1,K}:=\{\{\mu(\,\cdot\, \pmb{|}J\cap K )^{\otimes n}:  (J,K)\in\Theta\}$, 
by~\ref{Cor:Taxi_examples}(a)
with $K$ and $\mu(\cdot\cap K)$ in the roles of $\cX$ and $\mu$.
Analogously, if $J\in \Eta_2$ is fixed, then $\max_{\nu=1}^n X_\nu$ is 
complete sufficient for
$\cP_{2,J}:=\{\mu(\,\cdot\, \pmb{|}J\cap K )^{\otimes n} 
 : (J,K)\in\Theta\}$.
As each of $(\cP_{1,K}: K \in \Eta_1)$ and  $(\cP_{2,J}: J \in \Eta_2)$
is an exhaustion of the model $\cP$ of~\ref{Cor:Taxi_examples}(c), 
completeness of  $\lpp\min_{\nu=1}^n X_\nu,\max_{\nu=1}^n X_\nu\rpp $ follows from 
Theorem~\ref{Thm:main}.
\end{proof}

We next recall as Lemma~\ref{Lem:Compl_suff_weight} below part of a result 
of Smith~(1957) about weighted models, for which Patil~(2002) may serve 
as an introduction.  We provide a short proof for convenience, and also since 
our part (b), being slightly more general than the original, might appear to 
contradict the correct remark in Smith~(1957, p.~248, second line after 
Theorem). To this end,  we need the following perhaps not too 
well-known probabilistic property of conditional 
expectations implicitly proved by   Smith~(1957, p.~249).

\begin{Lem}[Strictness in  the isotonicity of conditional expectations, 
Smith 1957]                                               \label{Lem:Smith_C_E}
Let $X,Y$ be $\overline{\R}$-valued random variables
on $(\Omega,\cA,\P)$ with $X\le Y $ a.s.~and with existing expectations,
possibly infinite. Let $\cC$ be a sub-$\sigma$-algebra of $\cA$ 
and let $X_0\in\E( X{\pmb\,|\,}\cC)$ and   $Y_0\in\E( Y{\pmb\,|\,}\cC)$.
Then $X_0\le Y_0$ a.s. If in addition
neither $\E X=\E Y=\infty$ nor $\E X=\E Y=-\infty$, then  
$X_0 < Y_0$ a.s.~on $\{X<Y\}$.
\end{Lem}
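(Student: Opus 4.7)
The plan is to reduce the strict inequality to a simple statement about integrating a nonnegative function. The isotonicity claim $X_0\le Y_0$ a.s.~is standard: for any $\cC$-measurable $B\subseteq\{X_0>Y_0\}$, integration gives $\int_B X\dd\P=\int_B X_0\dd\P\ge\int_B Y_0\dd\P=\int_B Y\dd\P$, which together with $X\le Y$ forces $\P(B)=0$; the extended-valued integrals cause no trouble here after splitting $X$ and $Y$ into positive and negative parts along $B$.

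For the strict part, I would first draw from the hypothesis and $\E X\le\E Y$ the more convenient consequences $\E X<+\infty$ and $\E Y>-\infty$: indeed $\E X=+\infty$ would force $\E Y=+\infty$, which is excluded, and symmetrically for $-\infty$. In particular $X^+$ and $Y^-$ are integrable, so $X<+\infty$ and $Y>-\infty$ a.s. Then I set $A:=\{X_0=Y_0\}$, which, after choosing $\cC$-measurable representatives $X_0,Y_0$, belongs to $\cC$ and agrees a.s.~with $\{X_0\ge Y_0\}$ by the already proved isotonicity. By the defining property of conditional expectation,
\[
   \int_A X\dd\P &=&\int_A X_0\dd\P \,\,\,=\,\,\,\int_A Y_0\dd\P \,\,\,=\,\,\,\int_A Y\dd\P
\]
The first integral is bounded above by $\E X^+<\infty$ and the last is bounded below by $-\E Y^->-\infty$; since all four coincide, the common value is finite. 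Hence $\int_A(Y-X)\dd\P=0$, and as $Y-X\ge 0$ a.s.~(well-defined a.s.~by the above), $Y=X$ a.s.~on $A$, i.e., $\P(\{X<Y\}\cap\{X_0\ge Y_0\})=0$, which is the claimed strict inequality on $\{X<Y\}$.

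The main obstacle is purely the bookkeeping with extended-valued random variables: one must ensure that $\int_A X\dd\P$ and $\int_A Y\dd\P$ cannot both diverge when $X_0=Y_0$ on $A$, and this is precisely where the excluded cases enter. Without the hypothesis, the lemma fails -- for example with $\Omega=\N$, $\P(\{n\})=2^{-n}$, $X(n)=2^n$, $Y(n)=2^n+1$, and $\cC$ trivial, one has $\E X=\E Y=+\infty$ so $X_0=Y_0=+\infty$ while $X<Y$ throughout $\Omega$.
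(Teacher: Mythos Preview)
Your proof is correct and follows essentially the same route as the paper's: both arguments set $A:=\{X_0=Y_0\}\in\cC$, use the defining property of conditional expectation to obtain $\int_A X=\int_A Y$, and conclude $Y=X$ a.s.\ on $A$ from $Y-X\ge0$. You are somewhat more explicit than the paper in justifying the extended-value arithmetic (deriving $\E X^+<\infty$ and $\E Y^-<\infty$ and checking that the common integral is finite), and your counterexample for the necessity of the side condition is a nice addition not present in the paper.
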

\begin{proof} The first claim is of course standard, see 
e.g.~Hoffmann-J{\o}rgensen~(1994, p.~452). Under the additional 
assumption, $Y-X$ and $Y_0-X_0$ are defined a.s., and using
$\{X_0=Y_0\}\in\cC$ yields 
\[
 \E(Y-X)\1_{\{X_0=Y_0\}} &=&\E Y\1_{\{X_0=Y_0\}}-\E X\1_{\{X_0=Y_0\}} \\
 &=& \E Y_0\1_{\{X_0=Y_0\}}-\E X_0\1_{\{X_0=Y_0\}} \,\,\,=\,\,\, 0
\] 
and hence $X=Y$ a.s.~on $\{X_0=Y_0\}$, which, by contraposition
and since  $X\le Y$ and $X_0\le Y_0$ a.s., yields the second claim. 
\end{proof}
 
\begin{Lem}[Permanence of sufficiency and complete  
sufficiency under a fixed weighing, Smith 1957]   \label{Lem:Compl_suff_weight}
Let $\cP\subseteq\Prob(\cX,\cA)$ be a statistical model, 
$q:\cX\rightarrow [0,\infty[$ $\cP$-integrable,
and $\cP_q :=\{ P_q : P\in\cP, Pq >0\}$ with 
$P_q$ denoting the $q$-weighted version of $P$, 
given by    $P_q(A):=P\1_Aq/Pq$ for $A\in\cA$. 

\smallskip\noindent{\rm\textbf{(a)}} Let $\cC$ be sufficient for $\cP$. 
Then $\cC$ is sufficient for $\cP_q$.

\smallskip\noindent{\rm\textbf{(b)}} Let $\cC$ be complete sufficient for $\cP$.
Then $\cC$ is complete sufficient for $\cP_q$.
\end{Lem}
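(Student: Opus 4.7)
The plan for both parts is to exploit sufficiency of $\cC$ for $\cP$ by picking, for any $\cP$-integrable $g\ge 0$, a nonnegative common $\cC$-measurable version $\hat g\in\bigcap_{P\in\cP}P(g\,\pmb|\,\cC)$, which then satisfies $P(\phi g)=P(\phi\hat g)$ for every $\cC$-measurable $\phi\ge 0$ and every $P\in\cP$ (by monotone convergence extending the defining property of conditional expectation from bounded to nonnegative $\phi$). Fix such $\hat q\ge 0$ for $q$.

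For part~(a), fix $A\in\cA$, choose a nonnegative common version $\widehat{\1_Aq}$ of conditional expectation of $\1_Aq$, and define the $\cC$-measurable function $f_A:=\widehat{\1_Aq}/\hat q$ on $\{\hat q>0\}$ and $0$ elsewhere. For any $P\in\cP$ with $Pq>0$, the defining property of $\hat q$ applied to $\1_{\{\hat q=0\}}$ gives $P(\1_{\{\hat q=0\}}q)=P(\1_{\{\hat q=0\}}\hat q)=0$, hence both $P_q(\hat q=0)=0$ (so $f_A$ is defined $P_q$-a.s.) and, by the same argument with $\1_Aq$ in place of $q$ plus nonnegativity, $\widehat{\1_Aq}=0$ $P$-a.s.~on $\{\hat q=0\}$. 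Thus $f_A\hat q=\widehat{\1_Aq}$ $\cP$-a.s., and for $C\in\cC$ I then obtain
\[
Pq\int_Cf_A\dd P_q=P(\1_Cf_Aq)=P(\1_Cf_A\hat q)=P(\1_C\widehat{\1_Aq})=P(\1_C\1_Aq)=Pq\cdot P_q(A\cap C),
\]
where the second equality applies the opening remark with $\phi=\1_Cf_A$ and the fourth uses the defining property of $\widehat{\1_Aq}$. Hence $f_A$ is a common $\cC$-measurable version of $P_q(A\,\pmb|\,\cC)$, proving sufficiency of $\cC$ for $\cP_q$.

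For part~(b), sufficiency follows from~(a). For completeness, let $h:\cX\to\R$ be $\cC$-measurable with $P_qh=0$ for every $P\in\cP$ with $Pq>0$; then $P(hq)=0$ for every such $P$, and trivially for $P$ with $Pq=0$ (then $q=0$ $P$-a.s.). Splitting $h=h^+-h^-$ and applying the opening remark to $h^\pm$ in the role of $\phi$ and to $q$ gives $P(h\hat q)=P(hq)=0$ for every $P\in\cP$. The $\cC$-measurable function $h\hat q$ therefore vanishes $\cP$-a.s.~by completeness of $\cC$ for $\cP$, i.e., $\{h\neq 0\}\subseteq\{\hat q=0\}$ $\cP$-a.s. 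Now Lemma~\ref{Lem:Smith_C_E}, applied under each $P\in\cP$ with $X=0$, $Y=q$ and conditioning $\sigma$-algebra $\cC$, yields $\hat q>0$ $P$-a.s.~on $\{q>0\}$, equivalently $\{\hat q=0\}\subseteq\{q=0\}$ $P$-a.s. Combining, $q=0$ $P$-a.s.~on $\{h\neq 0\}$, so $P_q(h\neq 0)=P(\1_{\{h\neq 0\}}q)/Pq=0$ for every $P$ with $Pq>0$, i.e., $h=0$ $\cP_q$-a.s.

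The main conceptual obstacle, I expect, lies in part~(b): one must resist concluding $h=0$ $\cP$-a.s.~directly from $h\hat q=0$ $\cP$-a.s., because $\hat q$ can vanish on sets of positive $P$-measure. The passage from $\{h\neq 0\}\subseteq\{\hat q=0\}$ $\cP$-a.s.~to $h=0$ $\cP_q$-a.s.~requires $\{\hat q=0\}\subseteq\{q=0\}$ $P$-a.s., which is precisely the content of Lemma~\ref{Lem:Smith_C_E} and explains why that strictness-in-isotonicity lemma is recalled immediately beforehand.
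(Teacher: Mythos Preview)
Your proof is correct and follows essentially the same route as the paper's: for part~(a) both construct the candidate conditional expectation as the ratio of common versions of $P(\1_Aq\,\pmb|\,\cC)$ and $P(q\,\pmb|\,\cC)$, and for part~(b) both reduce to $h\hat q=0$ $\cP$-a.s.\ and then invoke Lemma~\ref{Lem:Smith_C_E} with $X=0$, $Y=q$ to pass from $\{\hat q=0\}$ to $\{q=0\}$. Your opening remark and the explicit splitting $h=h^+-h^-$ make the integrability checks slightly more visible than the paper's terser version, but the argument is the same.
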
 
\begin{proof} (a) There is an 
$f\in\bigcap_{P\in\cP}P(q\,\pmb{|}\,\cC)$. For $P_q\in\cP_q$, we then 
have $P_q(C)=P\1^{}_Cq /Pq =P\1^{}_Cf/Pq$ for $C\in\cC$, and so  
$f/Pq$ is a $P|_\cC$-density of $P_q|_\cC$. Now let $A\in\cA$ be given. 
With $g\in \bigcap_{P\in\cP}P(\1^{}_Aq\,\pmb{|}\,\cC)$,
we let $h:=g/f$ with $0/0:=0$, and, since $\1^{}_Aq \le q$,
we $\cP$-a.s.~have $g\le f$ and hence 
the implication
$f=0 \Rightarrow g=0$. Hence, for $P\in\cP_q$, 
we have 
$P_q\1^{}_C h = P\1^{}_C h f/Pq = P\1^{}_C g/Pq = P\1^{}_C \1^{}_Aq/Pq
= P_q\1^{}_C\1^{}_A$ for $C\in\cC$, 
and hence $h\in \bigcap_{P_q\in\cP_q}P_q(A\,\pmb{|}\,\cC)$.

(b) Let $h:\cX\rightarrow\R$ be $\cC$-measurable with $P_qh=0$ for
$P_q\in \cP_q$. Then, trivially for $P\in\cP$ with $Pq=0$
and hence for every $P\in\cP$, 
we have $0=P q h = Pgh$ with 
$g\in\bigcap_{P\in\cP}P(q\pmb{|}\cC)$ chosen by sufficiency. 
By completeness
of $\cC$ for $\cP$ and by $\cC$-measurability of $gh$, we have, for every 
$P\in\cP$, first $gh=0$ $P$-a.s.~and then, since $g>0$ $P$-a.s.~on $\{q>0\}$
by Lemma~\ref{Lem:Smith_C_E} applied to $X:=0$ and $Y:=q$, we have
$h=0$ $P$-a.s.~on $\{q>0\}$, and finally $h=0$ $\cP_q$-a.s. 
\end{proof}

We remark that Smith~(1957) considers also minimal sufficiency. 

\begin{Example}[Modification of complete sufficiency   
under an unknown truncation]         \label{Example:Compl_suff_unkknow_trunc}
Let $\cP_0\subseteq\Prob(\cX,\cA)$ be a model, 
$\cE\subseteq \cA$ $\cap$-stable,
and $P_E:= P(\,\cdot\, \pmb{|}E )$ for  $P\in\cP_0$ and  $E\in\cE$
with $P(E)>0$. Let 
$n\in\N$ and $\cC$ complete sufficient for $\{P^{\otimes n} : P\in \cP_0\}$.
Then $\cC\vee \sigma( E^n : E\in\cE)$ is complete sufficient for 
$ \{P_E^{\otimes n} : P\in\cP_0, E\in\cE, P(E)>0\}$.
\end{Example}
\begin{proof}
We apply Theorem~\ref{Thm:main} with $I:=\{1,2\}$:
Let $\Eta_1:=\cE$ and $\cP_{1,E}:=\{P_E^{\otimes n} : P\in\cP_0,P(E)>0\}$
for $E\in\Eta_1$. Then   $\cC_1:=\cC$ is complete sufficient for each 
$\cP_{1,E}$, by Lemma~\ref{Lem:Compl_suff_weight} applied to 
$\{P^{\otimes n} : P\in\cP_0\}$ 
and  $q:=\1_{E^n}$.  
Let  $\Eta_2:=\cP_0$ and 
$\cP_{2,P}:=\{ P_E^{\otimes n} : E\in\cE, P(E)>0\}$ for $P\in\cP_0$.
Then $\cC_2:= \sigma(E^n : E\in\cE)$ is complete sufficient for 
each $\cP_{2,P}$, by Lemma~\ref{Lem:uniform_cap-stable_compl_suff}
with $\mu=P$. Finally, each  $(\cP_{i,\eta}:\eta \in\Eta_i)$ exhausts $\cP$.
\end{proof}

\begin{Subexample}[Truncated exponential families on the line]
                                           \label{Subexample:Trunc_exp_fam}
Let $(\cX,\cA)$ be a measurable subspace of $\overline{\R}$ and let 
$\cP_0\subseteq\Prob(\cX,\cA)$ be a $k$-parameter exponential family 
in the statistics $T_1,\ldots,T_k:\overline{\R}\rightarrow \R$ and 
the natural parameters $a_1,\ldots,a_k:\cP_0\rightarrow \R$  
with  $\{ (a_1(P),\ldots,a_k(P)):P\in\cP_0\}$ having nonempty interior.
Let $n\in\N$, $\lpp X_1,\ldots,X_n \rpp := \id_{\cX^n}$, and 
$ S:=\lpp \sum_{i=1}^n T_1(X_i),\ldots,\sum_{i=1}^nT_k(X_i)\rpp$.

\smallskip\noindent{\rm\textbf{(a)}} 
$\lpp S, \min_{i=1}^nX_i\rpp$ is complete sufficient for 
\[ 
 \{P(\,\cdot\, \pmb{|} J )^{\otimes n} : P\in\cP_0, J\text{ upray in }\cX, P(J)>0\}.
\]
{\rm\textbf{(b}} 
$\lpp S, \max_{i=1}^nX_i\rpp$ is complete sufficient for 
\[
 \{P(\,\cdot\, \pmb{|} K )^{\otimes n} : P\in\cP_0, K\text{ downray in }\cX, P(K)>0\}.
\]
{\rm\textbf{(c)}} 
$\lpp S, \min_{i=1}^nX_i,  \max_{i=1}^nX_i\rpp$ is complete sufficient for 
\[
 \{P(\,\cdot\, \pmb{|} I )^{\otimes n} : P\in\cP_0, I\text{ interval in }\cX, P(I)>0\}.
\]
\end{Subexample}                            
\begin{proof} Example~\ref{Example:Compl_suff_unkknow_trunc} 
with $\cC:=\sigma(S)$ and Corollary~\ref{Cor:Taxi_examples}.
\end{proof}

\begin{Subsubexample}[Lehmann and Scheff\'e, 1955]   
Let  $X_1,\ldots,X_n$ be i.i.d.~according  \label{Subsubex:LS_exponential}
to the shifted exponential law, with $\pmb{\lambda}$-density given by 
$f_{a,b}(x)=\frac1b\exp(-\frac{x-a}b){\pmb1}_{]a,\infty[}(x)$ 
for $x\in\mathbb{R}$, where $a\in\R$ and $b\in\mathopen] 0,\infty\mathclose[$. 
Then $\min_{i=1}^nX_i$ and $\sum_{i=1}^nX_i$ are jointly complete 
sufficient.
\end{Subsubexample}
\begin{proof} Let us first replace ``$a\in\R$'' by ``$a>a_0$'' with 
$a_0\in\R$ fixed. Then the claim is a special case of 
\ref{Subexample:Trunc_exp_fam}(a), with $\cX=\R$, 
$\cP_0:=\{f_{a_0,b}\pmb{\lambda}: b\in\mathopen]0,\infty\mathclose[\}$,
$k=1$, $T_1(x)=x$. The claim with ``$a\in\R$'' then follows by writing 
the model in question as a union of the  
increasing sequence of the models with ``$a>a_0$'' with $a_0\in-\N$.
\end{proof}

The proof of Lehmann and Scheff\'e~(1955), given except for some 
measurability details also in Lehmann and Casella~\cite[p.~43]{LC_1998},
uses the independence of $X_{(1)}:=\min_{i=1}^nX_i$
and $\sum_{i=1}^n(X_i-X_{(1)})$.
Note that, nevertheless,  the Landers-Rogge 
theorem Example~\ref{Eample:Landers_Rogge} would not apply, 
since the law of $X_{(1)}$ depends on both parameters $a$ and $b$.
On the other hand, Theorem~\ref{Thm:CKS} does apply, 
as intended by Cramer et al.~(2002, p.~271), but to check 
condition~\ref{Thm:CKS}(iv), one apparently has to do some computation like 
the following: The densities of $\sum_{i=1}^n(X_i-X_{(1)})$ and $X_{(1)}$
are, up to constants depending only on the parameters $a$ and $b$, given
by $g_b(x):=x^{n-2}\exp(-\frac{x}b)\1_{]a,\infty[}(x)$ and 
$h_{a,b}(y):= \exp(-\frac{ny}b)\1_{]0,\infty[}(y)$,  and we  have
$ g_b(x)h_{a,b'}(y) \le g_{b\wedge b'}(x)h_{a,b\wedge b'}(y)$.

\section{Counterexamples}                    \label{Sec:Counterexamples}

We can not add an ``only-if''-statement in Theorem~\ref{Thm:main}, 
as present in its special case Example~\ref{Eample:Landers_Rogge}, not even 
in Corollary~\ref{Cor:I={1,2}} and with
``sufficiency'' strengthened to ``minimal sufficiency'' in the 
hypothesis:

\begin{Counterexample}  \label{CounterExp:No_converse}  
There exists a model $\cP=\{P_\theta:\theta\in\Theta_1\times \Theta_2\}$
with sub-$\sigma$-algebras $\cC_1$ and $\cC_2$ such that $\cC_1$ is minimal
sufficient but incomplete for each $\{P_{\theta_1,\theta_2}:\theta_1\in\Theta_1\}$
with $\theta_2\in\Theta_2$,   $\cC_2$ is minimal
sufficient but incomplete for each $\{P_{\theta_1,\theta_2}:\theta_2\in\Theta_2\}$
with $\theta_1\in\Theta_1$, and $\cC_1\vee\cC_2$ is complete sufficient for $\cP$.  
\end{Counterexample}
\begin{proof} We  may take $\Theta_1=\Theta_2=\mathopen]0,\infty\mathclose[$,
$n\in\N$, and $P_\theta$ the law of $n$ i.i.d.~normal random variables $X_i$, 
each with the density 
$\R\ni x\mapsto c(\theta)\exp(\theta^{}_1\theta^{}_2x-\theta_1^2\theta_2^3x^2)$,
and $\cC_1=\cC_2=\sigma(\sum_{i=1}^nX_i, \sum_{i=1}^n X_i^2)$.  
Here the first ``minimal sufficient but incomplete'' claim follows from 
the linear independence but algebraic dependence of 
$1,\theta^{}_1\theta^{}_2,-\theta_1^2\theta_2^3$ as functions of $\theta_1$,
with $\theta_2$ fixed, compare Pfanzagl~(1994, Theorem~1.6.9 and 
Wijsman's Theorem 1.6.23). The analogous second claim follows similarly.
Completeness of $\cC_1\vee\cC_2$ follows from the openness of
$\{(\theta^{}_1\theta^{}_2, -\theta_1^2\theta_2^3):\theta\in \Theta_1\times\Theta_2\} = \mathopen]0,\infty\mathclose[\times\mathopen]-\infty,0\mathclose[$. 
\end{proof}

Corollary~\ref{Cor:I={1,2}} would become false if any of the two sufficiency
assumptions were omitted, even if the assumptions 
(iii) and (iv) of Theorem~\ref{Thm:CKS_rewrite} were added:

\begin{Counterexample}             \label{CounterExp:Suff_needed}
Theorem~\ref{Thm:CKS_rewrite} would become false if the ancillarity condition
in its assumption~\ref{Aequiv:CKS_rewrite_theta1_fixed} were omitted. 
\end{Counterexample}
\begin{proof}
We may take $\cX:=\{0,1\}^2$, $\cA:=2^\cX$,
$\Theta_1:=\{0,1\}$, $\Theta_2:=\mathopen]0,\frac12\mathclose[$, and,
for $\theta\in\Theta$, 
\[
 P_\theta &:=& \left\{\begin{array}{ll}  \big((1-\theta_2)\delta_0
       +\theta_2\delta_1\big)^{\otimes 2}&\text{ if }\theta_1=0,\\
 \big(\theta_2\delta_0+(1-\theta_2)\delta_1\big)^{\otimes 2}&\text{ if }\theta_1=1,
 \end{array} \right.
\]
$\lpp X_1,X_2\rpp :=\id_\cX$, $\cC_1:=\sigma(X_1)$,
and $\cC_2:=\sigma(X_1+X_2)$.
The completeness of $\cC_1$ required by~\ref{Thm:CKS_rewrite}(i) 
is easily checked, the complete sufficiency of $\cC_2$  
in~\ref{Thm:CKS_rewrite}\ref{Aequiv:CKS_rewrite_theta1_fixed} is  
a standard result for 
Bernoulli chains, and the assumptions~\ref{Thm:CKS_rewrite}(iii),(iv) 
are obviously fulfilled. But $\cC_1\vee \cC_2 =\sigma(X_1,X_2)$ is 
incomplete, since we have $P_\theta\,(X_1-X_2) =0$ for each $\theta\in\Theta$. 
\end{proof}

\begin{Counterexample} \label{CounterEx:Hom_CKS} 
Theorem~\ref{Thm:CKS} would become false if 
its homogeneity assumption~\ref{Aequiv:CKS_homogeneous} were omitted.
\end{Counterexample}
\begin{proof}
Let $\cX_1:=\cX_2:=\Theta_1:=\Theta_2:=\{0,1\}$, $\cA_i$ the power set of 
$\cX_i$, $Q_0:=\frac13\delta_0+\frac23\delta_1$, 
$Q_1:=\frac23\delta_0+\frac13\delta_1$,
and $R_{\theta_1,\theta_2}:=\delta_{\theta_1}$ for $\theta\in\Theta_1\times\Theta_2$.
Then all assumptions of 
Theorem~\ref{Thm:CKS} but~\ref{Aequiv:CKS_homogeneous}
are fulfilled, and with $h(x_1,x_2):=|x_1-x_2|-\frac23$, 
we have~\eqref{Eq:Hypothesis_of_completeness} but not
\eqref{Eq:Conclusion_of_completeness}.  
\end{proof}

With an aim  analogous to the above but concerning their unproven stronger 
version of  Theorem~\ref{Thm:CKS}, Cramer et al.~(2002, pp.~275--276) present 
an erroneous counterexample: Their assumption ($\ast$) holds if $g$ is the 
signum function, but their conclusion ``$g=0$'' does not. 

Again in their version of  Theorem~\ref{Thm:CKS},
Cramer et al.~(2002, p.~273) formulated the completeness 
asumption~\ref{Thm:CKS}\ref{Aequiv:CKS_R_part_complete}
slightly less explicitly than we did here, 
and this apparently led to the ``clarification'' refuted as follows:
 
\begin{Counterexample}  \label{Couterex:San-M_Mouchart}  
The claim of San Martin and Mouchart~(2007, Theorem~2.1), which is the present 
Theorem~\ref{Thm:CKS} with~\ref{Aequiv:CKS_R_part_complete} replaced by
\begin{Aequiv}
\item[\rm(ii')] $\{R_{\theta_1,\theta_2}:\theta_1\in\Theta_1,\theta_2\in\Theta_2\}$ 
is complete
\end{Aequiv}
and without assumption~\ref{Aequiv:CKS_integrable},
remains false even if assumption~\ref{Aequiv:CKS_integrable} is added.  
\end{Counterexample}
\begin{proof} Let $\cX_1:=\cX_2:=\Theta_1:=\Theta_2:=\{0,1\}$, 
$\cA_i$ the power set of 
$\cX_i$, $Q_0:=R_{0,0}:=R_{0,1}:=\frac13\delta_0+\frac23\delta_1$, 
and $Q_1:=R_{1,0}:=R_{1,1}:=\frac23\delta_0+\frac13\delta_1$.
Then all the above assumptions are fulfilled, 
and with $h(x_1,x_2):=|x_1-x_2|-\frac49$, 
we have~\eqref{Eq:Hypothesis_of_completeness} but not
\eqref{Eq:Conclusion_of_completeness}.  
\end{proof}

Without any additional assumption, sufficiency of $\bigvee_{i\in I}\cC_i$
can not be added to the conclusion of Theorem~\ref{Thm:main}, 
not even in the special case of  Corollary~\ref{Cor:I={1,2}}:

\begin{Counterexample}             \label{CounterEx:C1_vee_C2_insuff}
There exists a model $\cP$ satisfying the assumptions of 
Corollary~\ref{Cor:I={1,2}}, but with $\cC_1\vee\cC_2$ insufficient. 
\end{Counterexample} 
\begin{proof} Let $\cX:=\{1,2,3\}$, $\cA:=2^\cX$, $\Theta_1:=\Theta_2:=\{1,2\}$,
and the $P_{\theta_1,\theta_2}$ defined by their densities $f_{\theta_1,\theta_2}$
with respect to counting measure given by 
\[
  f_{1,1}:=\frac13\quad\qquad f_{1,2}:= f_{2,1} :=\1_{\{3\}}\quad\qquad 
  f_{2,2}(x):=\frac{x}6 \quad\text{ for }x\in\cX.
\]
If $\theta_1=2$, then 
$\{P_{\theta_1,\theta_2}:\theta_2\in\Theta_2\}=\{P_{2,1},P_{2,2}\}$,
and for this model, by the reference given in the proof of 
Theorem~\ref{Thm:Main_hom_con}, the $\sigma$-algebra 
$\sigma(f_{2,\theta_2'}/f_{2,\theta_2''} :\theta_2',\theta_2''\in\Theta_2)
= \left\{\emptyset,\{1,2\},\{3\},\cX \right\}=:\cC_1$
is minimal sufficient and in fact  easily checked to be complete.
By similarly considering the remaining three models occurring in the 
assumptions~\ref{Cor:I={1,2}}\ref{Aequiv:Cor_theta2_fixed},\ref{Aequiv:Cor_theta1_fixed}, we see that 
the assumptions of Corollary~\ref{Cor:I={1,2}} are fulfilled
with $\cC_2:=\cC_1$. On the other hand, the  $\sigma$-algebra 
$\sigma(f_{\theta'}/f_{\theta''} :\theta',\theta''\in\Theta)=\cA$
is minimal sufficient for $\cP$, and hence 
$\cC_1\vee \cC_2$, being not almost surely equal to $\cA$,
is insufficient.
\end{proof}

\begin{Counterexample}      \label{Counterex:sigma-finiteness_needed}
Lemma~\ref{Lem:uniform_cap-stable_compl_suff}(a) would become false
if its $\sigma$-finiteness assumption were omitted, and even so if the 
assumption of~\ref{Lem:uniform_cap-stable_compl_suff}(b) were added.
\end{Counterexample}
\begin{proof}
Let $(\cX,\cA,\mu)=(\R,\cB(\R),\#$) be the real line with its 
Borel $\sigma$-algebra and counting measure, 
$\cE$ consist of all singletons and the empty set, 
and $n=1$. Then  $\cE$ is $\cap$-stable.
If $A\in\cA$, then, for every $x\in\cX$,
$h\in \delta_x(A\pmb{|}\cC)$ implies $h(x)=\1_A(x)$, and hence
$h\in\bigcap_{P\in\cP}P(A\pmb{|}\cC)$ would imply $h=\1_A$, but the latter
is not $\cC$-measurable if $A$ is neither countable nor co-countable.
So $\cC$ is not sufficient. 
\end{proof}

\section*{Acknowledgements}
We thank Todor Dinev and Christoph Tasto for their help with the proofreading.

\end{document}